\numberwithin{equation}{section}
\newtheorem{theorem}{Theorem}[section]
\newtheorem{lemma}[theorem]{Lemma}
\newtheorem{proposition}[theorem]{Proposition}
\newtheorem{rem}[theorem]{Remark}
\DeclareMathOperator{\var}{\mathrm{Var}}
\DeclareMathOperator{\cov}{\mathrm{Cov}}
\renewcommand{\ge}{\geq}
\renewcommand{\le}{\leq}
\newcommand{\ind}{\mathbf{1}}
\renewcommand{\tilde}{\widetilde}
\newcommand{\cA}{{\ensuremath{\mathcal A}} }
\DeclareMathSymbol{\leqslant}{\mathalpha}{AMSa}{"36} 
\DeclareMathSymbol{\geqslant}{\mathalpha}{AMSa}{"3E} 
\DeclareMathSymbol{\eset}{\mathalpha}{AMSb}{"3F}     
\renewcommand{\leq}{\;\leqslant\;}                   
\renewcommand{\geq}{\;\geqslant\;}                   
\newcommand{\dd}{\,\text{\rm d}}             
\newcommand{\sumtwo}[2]{\sum_{\substack{#1 \\ #2}}} 
\newcommand{\bbE}{{\ensuremath{\mathbb E}} }
\newcommand{\bbN}{{\ensuremath{\mathbb N}} }
\newcommand{\bbP}{{\ensuremath{\mathbb P}} }
\newcommand{\bbR}{{\ensuremath{\mathbb R}} }
\newcommand{\bbZ}{{\ensuremath{\mathbb Z}} }
\newcommand{\gb}{\beta}
\newcommand{\gd}{\delta}
\newcommand{\go}{\omega}
\newcommand{\gl}{\lambda}
\def\captionfont@{\footnotesize}
\def\captionheadfont@{\scshape}
\long\def\@makecaption#1#2{%
  \vspace{2mm}
  \setbox\@tempboxa\vbox{\color@setgroup
    \advance\hsize-6pc\noindent
    \captionfont@\captionheadfont@#1\@xp\@ifnotempty\@xp
        {\@cdr#2\@nil}{.\captionfont@\upshape\enspace#2}%
    \unskip\kern-6pc\par
    \global\setbox\@ne\lastbox\color@endgroup}%
  \ifhbox\@ne 
    \setbox\@ne\hbox{\unhbox\@ne\unskip\unskip\unpenalty\unkern}%
  \fi
  \ifdim\wd\@tempboxa=\z@ 
    \setbox\@ne\hbox to\columnwidth{\hss\kern-6pc\box\@ne\hss}%
  \else 
    \setbox\@ne\vbox{\unvbox\@tempboxa\parskip\z@skip
        \noindent\unhbox\@ne\advance\hsize-6pc\par}%
\fi
  \ifnum\@tempcnta<64 
    \addvspace\abovecaptionskip
    \moveright 3pc\box\@ne
  \else 
    \moveright 3pc\box\@ne
    \nobreak
    \vskip\belowcaptionskip
  \fi
\relax
}
\def\writefig#1 #2 #3 {\rlap{\kern #1 truecm
\raise #2 truecm \hbox{#3}}}
\title[Connective constant of 2D dilute lattice]
{Non-coincidence of quenched and annealed connective constants on the supercritical
planar percolation cluster}
\address{CEREMADE, Place du Mar\'echal De Lattre De Tassigny
75775 PARIS CEDEX 16 - FRANCE}
\email{lacoin@ceremade.dauphine.fr}
\author{Hubert Lacoin}
\begin{document}

\begin{abstract}
In this paper, we study the abundance of self-avoiding paths of a given length on a supercritical percolation cluster on $\bbZ^d$.
More precisely, we count $Z_N$, the number of self-avoiding paths of length $N$ on the infinite cluster
starting from the origin (which we condition to be in the cluster). We are interested in
estimating the upper growth rate of $Z_N$, $\limsup_{N\to \infty} Z_N^{1/N}$, which
we call the connective constant of the dilute lattice.
After proving that this connective constant is a.s.\ non-random,
we focus on the two-dimensional case and
show that for every percolation parameter $p\in (1/2,1)$, almost surely, 
$Z_N$ grows exponentially slower than its expected value.  In other words, we prove that 
$\limsup_{N\to \infty} (Z_N)^{1/N} <\lim_{N\to \infty} \bbE[Z_N]^{1/N}$,
where the expectation is 
taken with respect to the percolation process.
This result can be considered as a first mathematical attempt to understand the influence of disorder 
for self-avoiding walks on a (quenched) dilute lattice.
Our method, which combines change of measure and coarse graining arguments, 
does not rely on the specifics of percolation on $\bbZ^2$, so our result can 
be extended to a large family of two-dimensional models including general self-avoiding walks in a random environment.
\\
2000 \textit{Mathematics Subject Classification: 82D60, 60K37, 82B44.}  \\
  \textit{Keywords: Percolation, Self-avoiding walk, Random media, Polymers, Disorder relevance.}
\end{abstract}
\maketitle

\section{model and result}

\subsection{Introduction}

We are interested in percolation on the $\bbZ^d$ grid ($d\ge 2$) with its usual lattice structure.
We delete each edge with probability $1-p$ and investigate the connectivity properties of the resulting 
network (or rather of its unique infinite connected component when it exists).
More precisely, we want to study the asymptotic growth of the number of \textit{self-avoiding paths} of length $N$
starting from a given (typical) point on the dilute lattice.
A self-avoiding path is a lattice path  that does not visit the same 
vertex twice.

\medskip

Comparing the number of self-avoiding paths with its expected value gives some heuristic information concerning 
the influence of a quenched edge dilution on the trajectorial behavior of the self-avoiding walk.

\subsection{The self-avoiding walk}
Let us first recall some facts about the self-avoiding walk on a regular lattice (we focus on $\bbZ^d$ for the sake of simplicity).
Set 
\begin{equation}
\mathcal S_N:=
\{ \text{ self-avoiding paths $(S_n)_{n\in[0,N]}$  on $\bbZ^d$ of length $N$ starting at $0$ }\}
\end{equation}
and $s_N:=|\mathcal S_N|$.
As $s_N$ is a submultiplicative function, the limit 
\begin{equation}
 \lim_{n\to \infty} (s_N)^{1/N}:=\mu_d
\end{equation}
exists.

\medskip

The constant $\mu_d$ is called the connective constant of the network.
It is not expected to take any remarkable value as far as $\bbZ^d$ is concerned (on the two dimensional honeycomb 
lattice on the contrary,
it has been conjectured for a long time and has been recently proved that $\mu=\sqrt{2+\sqrt 2}$, see \cite{cf:DS}).

\medskip

The self-avoiding walk of length $N$ is a stochastic process whose law is given by the uniform probability measure on
$\mathcal S_N$. It has been introduced as a model for polymers by Flory \cite{cf:Flo}.
Theoretical physicists have then been interested in describing typical behavior of the walk for large $N$,  to understand
whether it differs from that of the simple random walk and why.
Their answer to this question depends on the dimension:
\begin{itemize}
 \item [(i)] When $d>4$, the self-avoiding constraint is a local one. Indeed, around a typical point of a simple random walk's trajectory, the 
 past and the future intersect finitely many times, at a finite distance.
 For this reason, the self-avoiding walk in dimension larger than $4$ scales like Brownian Motion.
 The case $d=4$ which corresponds to the critical dimension, should be similar but with logarithmic corrections.
 \item [(ii)] When $d<4$, the self-avoiding constraint acts also on a large scale and modifies the macroscopic structure of the walk.
 In particular, it forces the walk to go further: the end to end distance $|S_N|$ is believed to scale like 
 $N^\nu$, where $\nu=3/4$ for $d=2$ and $\nu\simeq 0.59$ for $d=3$. 
\end{itemize}

On the mathematical side, the picture is much less complete.
Above the critical dimension, when $d>4$, the use of the lace expansion by Brydges and Spencer \cite{cf:BSp}
allowed to make  the physicists prediction rigorous, but when $d<4$, very few things are known rigorously
(for a complete introduction to the subject and
a list of the conjecture see the first chapter of  \cite{cf:MSlade}, or \cite{cf:Slade} for a more recent survey).
Note that recently, Duminil-Copin and Hammond proved that the self-avoiding walk is non-ballistic in every dimension
\cite{cf:DH}.

\subsection{Percolation on $\bbZ^d$}

Let $\go$ be the edge dilution (or percolation) process defined on the set of the edges of $\bbZ^d$ as follows:
\begin{itemize}
 \item $(\go(e))_{e\in E_d}$ is a field of IID $\{0,1\}$ Bernoulli variables with law $\bbP_p$
 satisfying $\bbP_p(\go(e)=1)=p$. 
 \item Every edge $e$ such that $\go(e)=1$ is declared open or present whereas the others are deleted (or closed).
\end{itemize}
A set of edges is declared open if all the edges in it are open ; a self-avoiding path $S$ is declared open if all the
edges in the path are open (we will use the informal notation $e\in S$ to say that $e=(S_n,S_{n+1})$ for some $n$). 

\medskip

The nature of the new lattice obtained after deleting edges depends  crucially on the value of $p$. There exists a 
constant $p_c(d)$
called the percolation threshold such that the dilute lattice contains 
a unique infinite connected component if $p\in(p_c,1]$ 
(in addition to countably many finite connected components), 
and none if $p<p_c$. It is also known that $p_c(2)=1/2$
(see e.g.\ \cite{cf:G} for a complete introduction to percolation).

\subsection{The quenched connective constant for the percolation cluster}

In what follows we consider exclusively the supercritical percolation regime where $p>p_c$. We let $\mathcal C$ denote  the supercritical percolation cluster
(the unique infinite connected component).

\medskip

Set $Z_N$ to be the number of open self-avoiding paths of length $N$ starting from the origin:

\begin{equation}
 Z_N:=\sum_{S\in \mathcal S_N} \ind_{\{\text{$S$ is open}\}}.
\end{equation}
Similarly, one can define $Z_N(x)$ by considering paths that starts from $x$ instead of paths starting from the origin.
One has trivially 
\begin{equation}
 Z_N(x)=0 \text{ for all sufficiently large } N \Leftrightarrow x \notin \mathcal C.
\end{equation}
Our aim is to study the asymptotic behavior of $Z_N(x)$ when $x$ belongs to $\mathcal C$.
One can easily compute its expectation: for $x$ in $\bbZ^d$ we have

\begin{equation}\label{coco}
\bbE_p\left[Z_N(x)\right]=\bbE_p[Z_N]=\sum_{S\in \mathcal S_N}\bbP_p\left[\text{$S$ is open}\right]= p^N s_N,
\end{equation}
and this gives an upper bound on the possible growth rate of $Z_N(x)$
(we cannot compute  the expectation conditioned on $x\in \mathcal C$ exactly, but the reader can check that it has the same order of magnitude).

\medskip

Now we define two versions of the connective 
constant $\mu_d$ for the infinite percolation cluster
$\mathcal C$. 
 We call
\begin{equation}
\lim_{N\to \infty} \bbE_p \left[Z_N\right]^{\frac1N}= p\mu_d
\end{equation}
the \textit{annealed} connective constant. To define the \textit{quenched} one, we 
prove the following result is which valid in any dimension.

\begin{proposition}\label{taex}
For every $x\in \mathcal C$, the limit
\begin{equation}\label{limsupp}
 \mu_d(p):=\limsup_{N\to \infty} (Z_N(x))^{\frac 1 N},
\end{equation}
does not depend on $x$ and is non-random. We call it the quenched connective constant.

It satisfies the inequality
\begin{equation}\label{annqu}
 \mu_d(p)\le p \mu_d(1).
\end{equation}

\medskip

Moreover, the ratio $$\mu_d(p)/p\mu_d(1)$$ between the quenched and the  annealed connective constants is a 
non-decreasing function of $p$ on $(p_c,1]$.
\end{proposition}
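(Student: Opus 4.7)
The plan is to treat the three claims in sequence: non-randomness from ergodicity plus an $x$-independence argument, and the inequality and monotonicity from the annealed computation~\eqref{coco} together with a monotone coupling.

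\emph{Step 1: non-randomness.} I would first show that for any $x,y \in \cC$ one has $\limsup_N Z_N(x)^{1/N} = \limsup_N Z_N(y)^{1/N}$ almost surely. Given an open self-avoiding path $\gamma=(x=v_0,\dots,v_D=y)$ in $\cC$, concatenating $\gamma$ with an open SAW $T$ from $y$ whose interior avoids $\gamma\setminus\{y\}$ yields an open SAW of length $N+D$ from $x$, so $Z_{N+D}(x)\geq \bar Z_N(y;\gamma)$, where $\bar Z_N(y;\gamma)$ is the restricted count. The key auxiliary step is then to show that removing the finite obstacle $\gamma\setminus\{y\}$ does not change the exponential growth rate, i.e.\ $\limsup \bar Z_N(y;\gamma)^{1/N} = \limsup Z_N(y)^{1/N}$; this should follow from a last-exit decomposition of open SAWs hitting $\gamma\setminus\{y\}$ (into a prefix of some length $k$ ending at $v_j$ and a suffix of length $N-k$ avoiding $\gamma$), together with a self-consistent iterative bound. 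The symmetric comparison then gives equality. Once $x$-independence is established, the function $\omega\mapsto\limsup_N Z_N(x)^{1/N}$ (any $x\in\cC(\omega)$) is translation-invariant, and ergodicity of $\bbP_p$ forces it to be an a.s.\ deterministic constant, which we name $\mu_d(p)$.

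\emph{Steps 2 and 3: upper bound and monotonicity, treated together.} I would introduce the standard monotone coupling $\omega_p(e)=\ind_{\{U_e\leq p\}}$ with iid uniforms $U_e\sim\mathrm{Unif}[0,1]$. For $p_1<p_2$, conditionally on $\omega_{p_2}$ each $p_2$-open edge is independently $p_1$-open with probability $p_1/p_2$, so each $p_2$-open SAW is $p_1$-open with conditional probability $(p_1/p_2)^N$, giving
\begin{equation*}
\bbE[Z_N^{p_1}\mid\omega_{p_2}] \;=\; (p_1/p_2)^N\,Z_N^{p_2}.
\end{equation*}
Fix $\epsilon>0$. On the a.s.\ event $\{Z_N^{p_2}\leq(\mu_d(p_2)+\epsilon)^N \text{ eventually}\}$ (which is available from Step~1 for $p_2$), conditional Markov's inequality gives, for any $\lambda>(p_1/p_2)(\mu_d(p_2)+\epsilon)$, a summable bound $\bbP(Z_N^{p_1}\geq\lambda^N\mid\omega_{p_2})\leq C(\omega_{p_2})\, r^N$ with $r<1$; Borel--Cantelli conditional on $\omega_{p_2}$ then yields $Z_N^{p_1}<\lambda^N$ eventually, and integrating over $\omega_{p_2}$ preserves this a.s. Sending $\epsilon\downarrow 0$ and $\lambda\downarrow(p_1/p_2)\mu_d(p_2)$ delivers $\mu_d(p_1)\leq (p_1/p_2)\mu_d(p_2)$, i.e.\ $\mu_d(p_1)/p_1\leq\mu_d(p_2)/p_2$. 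Specializing to $p_2=1$, where $Z_N^1=s_N$ deterministically satisfies $s_N^{1/N}\to\mu_d(1)$, gives the annealed upper bound $\mu_d(p)\leq p\mu_d(1)$.

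\emph{Main obstacle.} The delicate part is the $x$-independence in Step~1: showing that imposing the finite exclusion $\gamma\setminus\{y\}$ does not drop the exponential growth of the quenched SAW count. A naive last-exit decomposition rebounds into $Z_{N-k}(v_j)$ at other cluster points, so one risks a circular estimate on the exponent; overcoming this requires a careful self-consistent control of the error term rather than a one-shot bound. Steps~2 and~3, in contrast, reduce cleanly to conditional Markov plus Borel--Cantelli once the monotone coupling is in hand.
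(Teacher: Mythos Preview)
Your proposal is correct in outline, and Steps~2--3 match the paper's argument essentially verbatim (monotone coupling, conditional expectation $(p_1/p_2)^N Z_N^{p_2}$, then Markov plus Borel--Cantelli). The differences lie in Step~1.

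For $x$-independence, the obstacle you flag---the ``circular'' last-exit decomposition over a long obstacle path $\gamma$---is avoided in the paper by reducing at once to \emph{neighboring} vertices $x,x'$ (which suffices by connectedness of $\cC$). With a single-vertex obstacle the circularity disappears: any SAW from $x$ avoiding $x'$ becomes a SAW of length $N+1$ from $x'$ by prepending the edge $(x',x)$, so $\bar Z_N(x)\le Z_{N+1}(x')$; combined with the decomposition over the (unique) time $k$ at which $x'$ is visited and the trivial bound $Y_k(x,x')\le Z_k(x')$, one gets
\[
Z_N(x)\;\le\;\sum_{k=1}^N Z_k(x')Z_{N-k}(x')+Z_{N+1}(x'),
\]
which is a one-shot inequality purely in terms of $Z_\cdot(x')$ and yields $\limsup Z_N(x)^{1/N}\le \limsup Z_N(x')^{1/N}$ immediately. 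Your ``self-consistent iterative bound'' would presumably also work, but it is not needed.

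For non-randomness, you invoke translation invariance and ergodicity of $\bbP_p$; this is valid once $x$-independence is in hand (the map $\omega\mapsto\mu(\cC(\omega))$ is then well defined and shift-invariant by uniqueness of the infinite cluster). The paper takes a different route: it proves a deterministic lemma that $\mu(\cC)$ is unchanged under addition or deletion of a single edge, deduces that $\mu(\cC(\omega))$ is measurable with respect to the tail $\sigma$-algebra of $(\omega_e)_e$, and concludes via Kolmogorov's $0$--$1$ law. Your ergodicity argument is slightly more direct here; the paper's edge-insensitivity lemma, on the other hand, is a statement about arbitrary bounded-degree graphs and does not rely on any probabilistic structure.
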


\begin{rem}\rm
 We believe that
\begin{equation}
  \lim_{N\to \infty}  (Z_N(x))^{\frac 1 N}
\end{equation}
exists, but the best we can do here is to state this as a conjecture.
\end{rem}

We are interested in knowing whether or not the inequality \eqref{annqu} is sharp.
The reason for this interest is that at a heuristic level, the ratio $Z_N/\bbE[Z_N]$ conveys some information
on the trajectorial behavior of the self-avoiding walk on the dilute lattice. The self-avoiding walk of length $N$ on the dilute lattice 
is the stochastic process whose law is given by the uniform probability measure on the random set
\begin{equation}
 \mathcal S_N(\go):=\{S\in \mathcal S_N \ | \ S \text{ is open for $\go$ }\}.
\end{equation}
Note that this definition makes sense for all $N$ only if $0\in \mathcal C$.

In analogy with what happens for directed polymers in a random environment, we believe that:
\begin{itemize}
 \item [(i)] If $Z_N/\bbE[Z_N]$ is typically of order $1$ then, 
 the self-avoiding walk on the dilute lattice has a similar behavior  to the walk on the full lattice.
 \item [(ii)] If $Z_N/\bbE[Z_N]$ decays exponentially fast, then disorder changes the behavior of the trajectories.
 It induces localization of trajectories (they concentrate in the regions where $\go$ is more favorable), and possibly
 stretches them, making the end-to-end distance $|S_N|$ larger.  
\end{itemize}
Using some of the techniques that have been used for directed polymers could bring these statements onto more rigorous ground 
(see \cite{cf:CY} for an analogy with case $(i)$ and \cite{cf:CH} for an analogy with the localization part of $(ii)$). 
Of course, saying something rigorous about the end-to-end distance for the disordered model is quite hopeless as it is already a difficult open question 
for the homogeneous model.

\begin{rem}\rm \label{fastrans}
As the ratio $\mu_d(p)/p\mu_d(1)$ is non-decreasing, there exists a unique $\bar p_c\in[p_c,1]$ such that
$\mu_d(p)<p\mu_d(1)$ for $p<(p_c,\bar p_c)$ and $\mu_d(p)=p\mu_d(1)$ for  $p\in(\bar p_c,1]$ (one of these intervals  possibly being empty).

\medskip

If $\bar p_c\in (p_c,1)$ then the function $p\mapsto \mu_d(p)$ cannot be analytic around  $\bar p_c$ so that $\bar p_c$ delimits a phase transition in the usual sense of the term, between 
what we call a localized or strong-disorder phase $(p_c,\bar p_c)$ and a weak-disorder phase phase $(\bar p_c,1]$.
We have no evidence that $p\mapsto \mu_d(p)$ is analytic or even continuous on $(p_c,\bar p_c)$.

\end{rem}

\subsection{Main result}

The main result of this paper is that the quenched connective constant is strictly smaller than the
annealed one for the model on $\bbZ^2$, suggesting localization of the trajectories.

\begin{theorem}\label{superteo}
For every $p\in (p_c(2),1)$
\begin{equation}\label{trucpricn}
 \mu_2(p)<p \mu_2(1),
\end{equation}
meaning that $Z_N/\bbE_p\left[Z_N\right]$ 
tends to zero exponentially fast.
Moreover, the function 
$$p\mapsto \frac{\mu_2(p)}{p\mu_2(1)}$$
is strictly increasing on $(p_c(2),1]$.
\end{theorem}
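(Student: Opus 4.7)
The plan is to establish a fractional moment bound on $Z_N$ via a block coarse graining and a local change of measure, adapting techniques developed for disordered pinning and directed polymer models. Concretely, the target is to produce $\theta\in(0,1)$ and $\delta>0$ such that
\begin{equation}
\bbE_p\bigl[Z_N^\theta\bigr]\leq C\bigl(p\mu_2(1)-\delta\bigr)^{\theta N}\qquad\text{for every }N.
\end{equation}
Markov's inequality together with Borel--Cantelli applied to the events $\{Z_N\geq (p\mu_2(1)-\delta/2)^N\}$ then forces $\limsup_N Z_N^{1/N}\leq p\mu_2(1)-\delta/2$ almost surely, which in view of Proposition \ref{taex} is exactly \eqref{trucpricn}.

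To produce this estimate I would partition $\bbZ^2$ into disjoint boxes of side $L$ (a large parameter to be tuned). A self-avoiding path of length $N$ induces a sequence of box visits with a prescribed list of entry/exit points on the box boundaries; call this data the \emph{skeleton} $\gamma$. A crude count bounds the number of admissible skeletons by $\exp(C(N/L)\log L)$. Dropping the inter-box self-avoidance, one has
\begin{equation}
Z_N\leq\sum_{\gamma}\prod_{j}W_{B_j,\gamma},
\end{equation}
where $W_{B,\gamma}$ is the number of open self-avoiding segments inside $B$ consistent with the boundary data of $\gamma$. Using the elementary inequality $\bigl(\sum_i a_i\bigr)^\theta\leq\sum_i a_i^\theta$ for $\theta\in(0,1)$ and the fact that disjoint boxes carry independent environments, the problem reduces to a uniform one-box bound of the form
\begin{equation}\label{plan:onebox}
\bbE_p\bigl[W_{B,\gamma}^\theta\bigr]\leq \gep(L)^\theta\,\bbE_p\bigl[W_{B,\gamma}\bigr]^\theta,
\end{equation}
valid for every admissible $\gamma$, with $\gep(L)\to 0$ as $L\to\infty$ fast enough to beat the skeleton entropy.

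The one-box estimate \eqref{plan:onebox} is where the change of measure enters. Introduce a tilted law $\tilde\bbP_B$ on the edges inside $B$ with density $g_B=\dd\tilde\bbP_B/\dd\bbP_p$, and apply H\"older:
\begin{equation}
\bbE_p\bigl[W_{B,\gamma}^\theta\bigr]\leq \bbE_p\bigl[g_B^{-\theta/(1-\theta)}\bigr]^{1-\theta}\tilde\bbE_B\bigl[W_{B,\gamma}\bigr]^\theta.
\end{equation}
The tilt must be designed so that the reverse moment of $g_B$ stays bounded uniformly in $L$ and in $\gamma$, while $\tilde\bbE_B[W_{B,\gamma}]\leq\gep(L)\bbE_p[W_{B,\gamma}]$. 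A natural two-dimensional candidate is a tilt that boosts the density of closed edges along a mesoscopic dual barrier cutting across $B$: planarity forces every open self-avoiding crossing of $B$ to meet this barrier, so a definite fraction of the paths counted in $W_{B,\gamma}$ is penalized by the tilt, uniformly in the entry/exit pattern.

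The main obstacle is the interaction between the coarse graining and the global self-avoiding constraint: a path can visit the same box many times, which makes the skeleton combinatorics delicate and forces the one-box estimate to be uniform over all compatible entry/exit patterns. Planarity enters twice here, first to limit the number of such patterns (crossings of a planar region cannot interlace arbitrarily) and second to ensure that the dual obstruction built into the tilt is unavoidable; this is also why the method transports without modification from percolation to general short-range 2D disordered models. Finally, the strict monotonicity of $p\mapsto\mu_2(p)/(p\mu_2(1))$ on $(p_c(2),1]$ should follow by a perturbation version of the same argument: for $p<p'$, couple $\go_p\leq\go_{p'}$ monotonically, condition on $\go_p$, and apply the change of measure to the residual independent Bernoulli field (of density $(p'-p)/(1-p)$ on $\go_p$-closed edges), producing a strict drop of the ratio at the lower parameter.
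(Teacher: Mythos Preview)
Your high-level philosophy (fractional moments, coarse graining, change of measure, H\"older) is the right one and matches the paper. However, the implementation you sketch has two genuine gaps that the paper's proof is specifically designed to avoid.

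\medskip

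\textbf{The skeleton/product decomposition.} You propose to record for each path a sequence of box visits with entry/exit data, bound $Z_N$ by $\sum_\gamma\prod_j W_{B_j,\gamma}$, and then use independence across boxes. For an \emph{undirected} self-avoiding walk this does not work: the same box $B$ can be visited many times, so several factors $W_{B_j,\gamma}$ in your product depend on the \emph{same} edge variables and are not independent; the expectation of the $\theta$-th power does not factorize. Your entropy bound $\exp(C(N/L)\log L)$ is also unjustified once you track every visit with its boundary data (the number of visits is not a priori $O(N/L)$). The paper sidesteps both issues by recording only the \emph{set} of boxes visited (a lattice animal $\mathcal A$ of size $m\in[N/N_0^2,\,9N/N_0]$), which has purely exponential entropy $49^m$, and then \emph{never} factorizes $Z_N(\mathcal A)$. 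Instead it applies a single global weight $f_{\mathcal A}(\go)=\prod_{x\in\bar{\mathcal A}}f_x(\go)$ and bounds $\bbE[f_{\mathcal A}\ind_{\{S\text{ open}\}}]$ path by path; the product structure lives in $f_{\mathcal A}$, not in $Z_N(\mathcal A)$.

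\medskip

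\textbf{The change of measure.} A dual barrier, or more generally any one-body tilt of the edge marginals by $\delta$, costs $\exp(cL^2\delta^2)$ per box, so boundedness forces $\delta\lesssim 1/L$. The resulting penalty on a path that spends $\ell$ steps in the box is $(1-\delta)^{\ell}\approx e^{-\ell/L}$. When the path is \emph{stretched} ($m$ of order $N/L$, hence $\ell$ of order $L$ per box) this is only a constant factor, not $\gep(L)\to 0$; you cannot beat even the animal entropy $49^m$ this way, let alone the extra $\log L$ your skeleton carries. This is exactly the regime (large $m$) that forces the paper to introduce a \emph{two-body} change of measure: a quadratic form $Q_x(\go)=\sum_{e\ne e'}(\go(e)-p)(\go(e')-p)/d(e,e')$ on each block, normalized so that $\var(Q_x)=O(1)$ while $\bbE[Q_x\mid S\text{ open}]\asymp\sqrt{\log N_0}$ for every path, by the logarithmic divergence of $\sum_{e'\in S^{(x)}}1/d(e,e')$ in two dimensions. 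That divergence, not any topological feature of planar crossings, is where dimension two enters. The paper still needs your simpler tilt, but only in the complementary regime of small $m$ (few boxes, so $\ell\gg L$ per box and the one-body penalty is large).

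\medskip

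\textbf{Monotonicity.} Your conditioning is in the wrong direction. To compare $p<p'$ one conditions on $\go_{p'}$ (the finer environment), realizes $\go_p$ as Bernoulli$(p/p')$ percolation on the $\go_{p'}$-open edges, and reruns the entire fractional-moment/change-of-measure argument on this dilute lattice to get $\bbE[Z_N(\go_p)^{1/2}\mid\mathcal F_{p'}]\le (b^N(p/p')^N Z_N(\go_{p'}))^{1/2}$. Conditioning on $\go_p$ as you suggest leaves $Z_N(\go_p)$ deterministic and gives no leverage.
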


Although the proof allows one to extract an explicit upper bound for $\mu_2(p)-p\mu_2(1)$, which gets exponentially small 
when $p$ approaches one,
we believe it to be  far from optimal when edge dilution is small. Indeed, if $|S_N|$ scales like $N^{\nu}$ with $\nu<1$, then the argument of 
the proof in \cite[Section 3]{cf:L} give at a heuristic level that $p\mu_2(1)-\mu_2(p)$ is at least of order 
$(1-p)^{\frac{1}{2(1-\nu)}}$
(which is to be compared with
the bound in \eqref{irrelvt}).

\subsection{Comparison with predictions in the physics literature}

Although the physics literature concerning the self-avoiding walk on a dilute lattice is quite rich (for the first paper on the subject see \cite{cf:CK}), 
it is difficult to extract  a solid conjecture on the value of $\mu_d(p)$ from the variety of contributions.

\medskip

The first reason is that most of the studies focus on the trajectorial behavior, and only 
 marginal attention is given to the partition function $Z_N$
(in that respect,
\cite{cf:BKC} is a noticeable exception with explicit
focus on $\mu_d(p)$). 

\medskip

The second reason is that while it is often not stated explicitly, it seems that most of the papers
from the eighties seem to be focused on the {\sl annealed} models of self-avoiding walk on a percolation cluster, which is mathematically trivial
(see for instance the sequence of equations to compute the mean square of the end-to-end distance in \cite{cf:H2}).
\medskip

Because of this last remark, we do not feel that our result contradicts the many papers (see e.g.\ \cite{cf:Kr, cf:H2, cf:HM}) which
predict that edge-dilution does not change the walk's behavior. The few numerical studies available concerning the connective constant are not very informative either:
both  \cite[Table 1]{cf:CR} and \cite[Figure 3]{cf:BKC} give values for $\mu_2(p)$ that violates the annealed bound $\mu_d(p)\le p \mu_d(1)$.

\medskip

In \cite{cf:LDM}, the authors clearly state that they study the quenched problem, and make a number of predictions partially based on a
renormalization group study performed on hierarchical lattices:
\begin{itemize}
 \item When $d=2,3$, there is no weak disorder/strong disorder phase transition, and an arbitrary small dilution changes the properties of the self-avoiding walk.
 \item When $d>4$, a small edge-dilution does not change the trajectorial property, and there is a phase transition from a weak disorder phase 
 to a strong disorder phase when $p$ varies.
\end{itemize}
Furthermore, they give an explicit formula linking the typical fluctuations of $\log Z_N$ around its mean with the end-to-end exponent $\nu$ in the strong disorder phase.
This prediction agrees with the earlier one present in \cite{cf:CK}, where the Harris Criterion (from \cite{cf:Hcrit}) is used to decide on  disorder relevance.

\medskip

To our understanding, our result partially confirms the prediction of \cite{cf:LDM} in dimension $2$.
We would translate the higher dimension predictions in terms of the quenched connective constant as follows:
\begin{itemize}
 \item  For $d=3$ $\mu_d(p)<p\mu_d(1)$ for all $p<1$.
 \item  When $d>4$, we have $\bar p_c(d)\in (p_c(d),1)$ where $\bar p_c(d)$ is defined in Remark \ref{fastrans}.
 \end{itemize}
 
 This is quite similar to what happens for directed percolation (see \cite{cf:HDP}) where
for $d\le 2$ the number of open directed paths is much smaller than its expected value for every $p$, while 
when $d\ge 3$ a weak disorder phase exists. 
 
 \medskip
 
 In Section \ref{fredy} we prove a result that supports this conjecture, namely that if the volume exponent in dimension $3$ is smaller than $2/3$, then 
 $Z_N$ is typically much smaller than its expectation (but not necessarily exponentially smaller).

\medskip

Predicting anything about the critical dimension $d=4$ is trickier, and all of this remains at a very speculative level.
Also, the existence of a weak disorder phase is a much more challenging question than for directed percolation for which it can be proved with a two-line computation (by computing the second moment of $Z_N$).

\medskip

Note that the relevant critical dimension for the problem we are interested in should be the one of the random walk and not the one of the percolation process: $d=4$ for the self-avoiding walk, and $d=2$ 
for the simple random walk in the oriented model.

\begin{rem} \rm After the first draft of this work appeared, we have been able to make a step further in confirming the physicists' prediction by proving that   in large enough dimensions $\bar p_c(d)>p_c$, i.e.\ that 
the strong disorder phase exists (see \cite{cf:HHP}).
\end{rem}

\section{Existence of the quenched connective constant and monotonicity properties}

\subsection{Proof of Proposition \ref{taex}}
We prove in this section that $\mu_d(p)$ is well-defined. 
Some intermediate lemmata are proved for general infinite connected graphs.
\begin{lemma}\label{conecg}
For an infinite connected graph $\mathcal C$ with bounded degree and $x$ a vertex of $\mathcal C$, we define
$$Z_N(x):=| \{ \text{ self-avoiding path of length $N$ and starting from $x$ on $\mathcal C$ }\}|.$$
Then the quantity
\begin{equation}
 \mu(\mathcal C):= \limsup_{N\to\infty} ( Z_N(x))^{1/N}
\end{equation}
is a constant function of $x$.
\end{lemma}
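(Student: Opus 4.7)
The plan is to reduce, by connectedness of $\mathcal{C}$, to proving $\mu(x)\le\mu(y)$ for adjacent vertices $x\sim y$ (where $\mu(z)$ temporarily denotes $\limsup_{N\to\infty} Z_N(z)^{1/N}$); the general case then follows by iterating the one-edge inequality along a finite path in $\mathcal{C}$ joining $x$ to $y$, with the reverse inequality obtained by exchanging the roles of $x$ and $y$.

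Assume then $x\sim y$, and split $Z_N(x)=A_N+B_N$ according to whether a self-avoiding path of length $N$ starting at $x$ visits $y$ or not. Paths that avoid $y$ can be prepended by the edge $yx$, giving an injection into $\mathcal{S}_{N+1}(y)$, and hence $A_N \le Z_{N+1}(y)$. Paths that visit $y$ do so at a unique time $k\in\{1,\dots,N\}$ by self-avoidance; splitting such a path at this visit into a prefix $(S_0,\dots,S_k)$ (a SAW from $x$ to $y$ of length $k$) and a suffix $(S_k,\dots,S_N)$ (a SAW from $y$ of length $N-k$) yields
\begin{equation*}
B_N\le\sum_{k=1}^N P_k(x,y)\, Z_{N-k}(y),
\end{equation*}
where $P_k(x,y)$ is the number of self-avoiding paths from $x$ to $y$ of length $k$.

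The crucial observation is that reversing a SAW from $x$ to $y$ produces a SAW starting at $y$, so $P_k(x,y)\le Z_k(y)$. Plugging this in gives
\begin{equation*}
Z_N(x)\le Z_{N+1}(y)+\sum_{k=1}^N Z_k(y)\, Z_{N-k}(y),
\end{equation*}
an inequality expressed entirely in terms of counts at $y$. For any $\varepsilon>0$, the definition of $\mu(y)$ as a limit superior gives $Z_M(y)\le(\mu(y)+\varepsilon)^M$ for all sufficiently large $M$, and a routine splitting of the convolution sum (using also that $\mu(y)\ge 1$, since any infinite locally finite connected graph admits a self-avoiding geodesic of every length from each vertex) bounds the right-hand side by $C_\varepsilon\, N(\mu(y)+\varepsilon)^N$ for $N$ large. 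Taking $N$-th roots yields $\mu(x)\le\mu(y)+\varepsilon$, and letting $\varepsilon\to 0$ concludes.

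The main subtlety is to avoid a circular estimate on $B_N$: the more immediate bound $P_k(x,y)\le Z_k(x)$ would put $Z_N(x)$ on both sides with coefficients of the same exponential order and yield no information. The reversal trick is precisely what breaks this symmetry, re-expressing the prefix count in terms of the growth of the graph as seen from $y$ alone, so that the resulting convolution inequality can be closed against $\mu(y)$.
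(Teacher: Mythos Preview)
Your proof is correct and follows essentially the same route as the paper: reduce to adjacent vertices, split paths from $x$ according to whether (and when) they hit $y$, bound the non-visiting paths by prepending the edge $yx$, and bound the visiting paths via the reversal observation $P_k(x,y)\le Z_k(y)$ to obtain the same convolution-type inequality $Z_N(x)\le Z_{N+1}(y)+\sum_{k=1}^N Z_k(y)\,Z_{N-k}(y)$. Your write-up is slightly more detailed in the final $\limsup$ extraction and in flagging the circularity pitfall, but the argument is the same.
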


Before starting the proof, let us mention that the formula \eqref{hme} below, which is the main step of the proof,  also appears in a paper of  Hammersley  \cite[Equation (13)]{cf:Ham}, where it is proved using the same argument.

\begin{proof}
As $\mathcal C$ is by definition connected, it
is sufficient to show that $\limsup_{N\to\infty} ( Z_N(x))^{1/N}$ takes the same value for every pair of neighbors.
Let $x$ and $x'$ be connected by an edge of $\mathcal C$.
Let $\bar Z_N(x)$ be the number of 
self-avoiding paths of length $N$ starting from $x$ and never visiting $x'$.
Let $Y_N(x,x')$ be the number of self-avoiding paths of length $N$ starting from $x$ and ending at $x'$.
One has trivially  (noting that $Y_N(x,x')=Y_N(x',x)$)
\begin{equation}\begin{split}
 Y_N(x,x') &\le Z_N(x'),\\
 \bar Z_{N}(x)&\le  Z_{N+1}(x').
\end{split}
\end{equation}
The second inequality simply says that $\bar Z_N(x)$ also counts the number of paths
 of length $N+1$ starting from $x'$
whose first step is $x$.

Let $k$ denote the time of the visit of $S$ to $x'$. Decomposing $Z_N(x)$ according to possible values $k$ one gets
\begin{equation}\label{hme}
 Z_N(x)\le \sum_{k=1}^N Y_k(x,x') Z_{N-k}(x')+ \bar Z_N(x)
\le \sum_{k=1}^N  Z_{k}(x')Z_{N-k}(x')+ Z_{N+1}(x'),
\end{equation}
which is enough to conclude that 
\begin{equation}
 \limsup_{N\to \infty}  Z_N(x)^{1/N}\le  \limsup_{N\to \infty}  Z_N(x')^{1/N},
\end{equation}
and thus by symmetry, that the two are equal.

\end{proof}

\begin{rem}\rm
Although to our knowledge,  Lemma \ref{conecG} has not appeared before literature, tin
\end{rem}

What remains to be done is to prove that when $\mathcal C$  is a supercritical percolation cluster,
$\mu(\mathcal C)$ is a.s.\ non-random.
 A first step is to show that $\mu(\mathcal C)$
is non-sensitive to individual edge addition (and thus to edge removal).

\begin{lemma}\label{edgead}
Let  $\mathcal C$ be an infinite connected graph with bounded degree and $x,x'\in \mathcal C$ that are not linked by an edge.
Then define

\begin{itemize}
 \item [(i)] $\mathcal C'$ to be the graph constructed from $\mathcal C$ by adding a new vertex called $y$ and
and an edge $(x,y)$ linking $x$ to $y$.
 \item [(ii)] $\mathcal C''$ to be the graph with same set of vertices as $\mathcal C$, and an added edge: $(x,x')$.

\end{itemize}
We have
\begin{equation}
 \mu(\mathcal C)= \mu(\mathcal C')=\mu(\mathcal C'').
\end{equation}
\end{lemma}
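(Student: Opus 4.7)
The plan is to establish each of $\mu(\mathcal C)=\mu(\mathcal C')$ and $\mu(\mathcal C)=\mu(\mathcal C'')$ by combining the trivial inequalities $\mu(\mathcal C)\leq\mu(\mathcal C')$ and $\mu(\mathcal C)\leq\mu(\mathcal C'')$ (which hold because any self-avoiding path in $\mathcal C$ is also a self-avoiding path in the larger graph) with converse bounds obtained from a short self-avoidance analysis. The key simplification is to invoke Lemma \ref{conecg} so that, in each of the three graphs, we are free to choose the basepoint used to define $\mu$. The natural choice is the common vertex $x$, because starting from $x$ drastically restricts how the newly added vertex or edge may be used.

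For case (i), I claim that no self-avoiding path of length $N\geq 2$ starting at $x$ ever visits the new vertex $y$. Indeed, $y$ has degree one in $\mathcal C'$, with unique neighbor $x$: if such a path visited $y$ at an intermediate time $0<k<N$, then both $S_{k-1}$ and $S_{k+1}$ would have to equal $x$, contradicting self-avoidance; and if it visited $y$ at time $k=N$, then $S_{N-1}=x=S_0$, again impossible for $N\geq 2$. Hence the counts of self-avoiding paths of length $N$ starting at $x$ in $\mathcal C$ and in $\mathcal C'$ coincide for $N\geq 2$, and $\mu(\mathcal C')=\mu(\mathcal C)$ follows immediately.

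For case (ii), the same self-avoidance idea applied with basepoint $x$ shows that the added edge $(x,x')$ can be traversed only at the very first step: if a self-avoiding path starting at $x$ used this edge at some time $k\geq 1$, one of $S_k,S_{k+1}$ would have to equal $x=S_0$, which is forbidden. Thus every self-avoiding path of length $N$ starting at $x$ in $\mathcal C''$ either lives entirely in $\mathcal C$ or begins with the step $x\to x'$ and is completed by a self-avoiding path of length $N-1$ in $\mathcal C$ starting at $x'$. This yields
\begin{equation}
 Z_N^{\mathcal C''}(x)\leq Z_N^{\mathcal C}(x) + Z_{N-1}^{\mathcal C}(x'),
\end{equation}
with the superscripts indicating the ambient graph. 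Taking $N$-th roots and $\limsup$, and invoking Lemma \ref{conecg} to identify $\limsup_N (Z_{N-1}^{\mathcal C}(x'))^{1/N}$ with $\mu(\mathcal C)$ (since $x'$ need not equal $x$), we conclude $\mu(\mathcal C'')\leq\mu(\mathcal C)$.

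There is no serious obstacle in this argument: the whole proof rests on the observation that by taking $x$ as basepoint in all three graphs, self-avoidance alone forbids all but the most trivial uses of the added vertex or edge. Lemma \ref{conecg} is needed only to make that choice of basepoint legitimate in $\mathcal C'$ and $\mathcal C''$, and, in case (ii), to match the exponential growth rate of $Z_{N-1}^{\mathcal C}(x')$ with $\mu(\mathcal C)$.
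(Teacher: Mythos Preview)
Your proof is correct and follows essentially the same approach as the paper: both choose $x$ as basepoint (legitimized by Lemma~\ref{conecg}), observe that for $N\ge 2$ the pendant vertex $y$ can never be visited by a self-avoiding path from $x$, and in case~(ii) decompose according to whether the new edge is used to obtain $Z''_N(x)\le Z_N(x)+Z_{N-1}(x')$. Your write-up is slightly more explicit about the role of Lemma~\ref{conecg} and about why the continuation after the first step lies in $\mathcal C$, but the argument is the same.
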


\begin{proof}

We call $Z'_N$ and $Z''_N$ the number of self-avoiding path of length $N$ on $\mathcal C'$ resp.\ $\mathcal C''$.
Note that for $N\ge 2$, we have $Z_N(x)=Z'_N(x)$ and thus $\mu(\mathcal C)= \mu(\mathcal C')$.

\medskip

Now consider the case of $\mathcal C''$.
Decomposing over paths that use the edge $(x,x')$ and those that don't, one gets

\begin{equation}
 Z_N(x)\le Z''_N(x)\le Z_{N-1}(x')+Z_N(x).
\end{equation}
Taking the above inequality to the power $\frac{1}{N}$ and passing to the $\limsup$, one gets that
\begin{equation}
\limsup_{N\to \infty}(Z''_N(x))^{1/N}=\mu(\mathcal C),
\end{equation}
which ends the proof.
\end{proof}

We are now ready to conclude the proof of Proposition \ref{taex}. In what follows $\mathcal C$ denotes again the infinite connected component of the percolation process.
By uniqueness of the infinite cluster,
modifying the environment on any finite set of edges only adds or deletes finitely many edges to $\mathcal C$, so that the 
new cluster can be obtained from the old one by performing Operations $(i)$ or $(ii)$ of Lemma \ref{edgead} or their converse a finite number of times.
Hence by Lemma \ref{edgead}, $\mu(\mathcal C(\go))$ is measurable with respect to the tail sigma-algebra of the field $(\go_e)_{e\in E_d}$, 
which is known to be trivial. Hence it is non-random.

\qed

\subsection{Proof of monotonocity of $(\mu_d(p)/p\mu_d(1))$}\label{tootoo}
To prove the monotonicity of the ratio between the quenched and annealed connectivity constants, we use a coupling argument that is quite standard.
We couple the two measures $\bbP_p$ and $\bbP_p'$ for $p_c<p<p'<1$ as follows: let $E_d$ denote the set of edges of $\bbZ^d$, we consider a field $(X(e))_{e\in E_d}$ of IID  random variables (call $\bbE$ the law of the field) 
that are
uniformly distributed in $[0,1]$.
Then one sets
\begin{equation}
 \go_p(e)=\ind_{\{X(e)\le p\}}, \quad \go_{p'}(e)=\ind_{\{X(e)\le p'\}}.
\end{equation}
With this construction, the infinite open clusters of $\go_p$ and $\go_p'$ satisfy   $\mathcal C_p\subset \mathcal C_{p'}$.
Moreover, if one sets
\begin{equation}
 \mathcal F_{p'}=\sigma(\go_{p'}(e), e\in \bbZ^d),
\end{equation}
then one has
\begin{equation}
 \bbE \left[ Z_N(\go_p)\ | \ \mathcal F_{p'}\right]=\sum_{S \in \mathcal S_N} \bbP\left[\ind_{S\text { is open for $\go_p$}} \ | \ \mathcal F_{p'}\right].
\end{equation}
The reader can then check that
\begin{equation}
 \bbP\left[\ind_{S\text { is open for $\go_p$} } \ | \mathcal F_{p'}\right]= \ind_{S\text { is open for $\go_{p'}$}}(p/p')^N.
\end{equation}
Summing over $S\in \mathcal S_N$ gives
\begin{equation}
  \bbE \left[ Z_N(\go_p)\ | \ \mathcal F_{p'}\right]= (p/p')^N Z_N(\go_p').
\end{equation}
Using the Borel-Cantelli Lemma, one gets that for all $N$ large enough

\begin{equation}
 Z_N(\go_p)\le N^2 (p/p')^N Z_N(\go_p'),
\end{equation}
which implies
 \begin{equation}
  \frac{\mu_d(p)}{p}\le \frac{\mu_d(p')}{p'}.
 \end{equation}
.

\qed

\section{Proof of the main result: Theorem \ref{superteo}} \label{pprroof}

In this section we focus on the proof of the non-equality between the quenched and annealed connective constants, or Equation \eqref{trucpricn}.
For the proof of the strict monotonicity of $\mu_d(p)/p\mu_d$ we refer to Section \ref{newsec}.

\subsection{About the proof}

The main ingredients of the proof are fractional moment, coarse-graining and change of measure. This combination of  ingredients 
has been used several times in 
the recent past in the study of disordered systems with the aim of comparing quenched and annealed behaviors. 
The method was first introduced in \cite{cf:GLT} for the study of 
disordered pinning on a hierarchical lattice. It was then improved in \cite{cf:Tcg} (introduction of an efficient coarse-graining
on a non-hierarchical setup) and in \cite{cf:GLT2, cf:GLT3} (improvement of the change of measure argument by introducing a multibody interaction). 
It  has also
been successfully adapted to a variety of models and we can cite a few contributions on the random walk pinning model \cite{cf:BT, cf:BS}, 
directed polymers
in a random environment
\cite{cf:L}, stretched polymers \cite{cf:Z}, random walk in a random environment \cite{cf:YZ} (for technical details \cite[Section 4]{cf:L}   is probably the most related to what 
we are doing here).
\medskip

The major difference between all the models mentioned above  and the one we are studying here is the amount of knowledge that one 
has on the annealed model. The annealed version of all of the models is either the directed simple random walk 
or a mildly modified version of it (e.g.\ in \cite{cf:YZ, cf:Z}) and
the proof uses the rather precise knowledge that one has about the
simple random walk (e.g.\ the central limit theorem) to draw conclusions. In \cite{cf:GLT2, cf:GLT3, cf:L, cf:YZ, cf:Z}, in the $(1+2)$- or $3$-dimensional case,
the need for a more refined change of measure is due to the fact that we are at the critical dimension, where extra precision is needed. 

\medskip

On the contrary, here, even though we are not at the critical dimension (recall that we believe that the result also holds in dimension $3$), 
similar refinements have to be used for a different reason.
The problem is rendered more difficult by the fact that almost nothing has been rigorously proved for the planar self-avoiding walk in spite of numerous conjectures
(e.g.\ we don't have a good control on $\bbE\left[ Z_N \right]$ beyond the exponential scale, and we almost have
no rigorous knowledge about the trajectory properties). For this reason, we  need a method that covers all of the worst-case scenarios.
As a consequence we believe that the quantitative estimate that we derive from our method is almost irrelevant.
The techniques we use rely neither on the peculiar features of percolation nor on the lattice and thus are quite easy to export to other $2$-dimensional models
(see Sections \ref{omega} and \ref{export}).

\medskip

The main novelties in the proof are:
\begin{itemize}
 \item A new type of coarse-graining, that allows to take into account the fact that contrary to the directed walk case, the walk can and will return to regions that it has already visited  (in \cite{cf:YZ, cf:Z} 
even if the walks are not directed
the situation is different because they naturally stretch along one direction). 
 \item A new type of change of measure (inspired from the one used in \cite{cf:L}, 
 but modified to adapt our new setup) and a new method 
to estimate the
gain given with this change of measure. 
\end{itemize}

The rest of the proof is organized as follows: In Section \ref{fredy}, in order to familiarize the reader with our change of measure technique, we 
prove a simple result (Proposition \ref{fredo}) which gives a relation between the volume exponent and the behavior of the partition function at small dilution. 
In Section \ref{cganim}, we explain what we mean by fractional moments, and introduce 
our coarse-grained decomposition.
It associates a coarse-grained lattice animal with each trajectory. This reduces the proof of \eqref{trucpricn} to Proposition \ref{fundamental},
which controls the contribution of each animal.
In Section \ref{measur} we give the main idea of the proof of Proposition \ref{fundamental}, which is proved in 
Sections \ref{smallm} and \ref{bigm} for small and large values of $m$ respectively, $m$ being the size of the coarse-grained animal.

\subsection{Change of measure without coarse graining: Heuristics and conjecture support}\label{fredy}

In order to give the reader a clear view of the ideas hiding behind our proof strategy, we want to prove first a simpler result that partially confirms the prediction of \cite{cf:LDM}.
Note also that it suggests that the use of Harris Criterion in  \cite{cf:CK} is valid, as our result establishes a relation between the relevance of disorder and the positivity of the specific heat exponent $2-d\nu_d$.

Set 
\begin{equation}
\mathcal S_N(\alpha):= \{ S\in \mathcal S_N \ | \ \max_{n\in[0,N]} \|S_n\| \le N^\alpha \},
\end{equation}
where $\|S_n\|$ is the $l_{\infty}$-norm of $S_n$, and $s_N(\alpha):=|\mathcal S_N(\alpha)|$. We define the volume exponent

\begin{equation}
\nu_d:=\inf \{ \alpha \  | \  \liminf_{N\to \infty} s_N(\alpha)/s_N=1\}.
\end{equation}

\begin{proposition}\label{fredo}

Assume that the relation $d\nu_d<2$ is satisfied. Then for all $p<1$, in probability,
\begin{equation}\label{lacedroi}
\frac{Z_N}{\bbE_p[Z_N(\go)]}\to 0.
\end{equation}

\medskip

In particular when $d=3$, if $\nu_3<2/3$ then \eqref{lacedroi} holds for all $p<1$.

\end{proposition}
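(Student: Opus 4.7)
The strategy, a warm-up for the full coarse-graining argument, is a \emph{fractional moment combined with a localized change of measure}. The hypothesis $d\nu_d<2$ permits choosing $\alpha$ with $\nu_d<\alpha<2/d$. Set $\Lambda_N:=[-N^\alpha,N^\alpha]^d\cap\bbZ^d$, which contains $|\Lambda_N|\asymp N^{d\alpha}$ edges, and put $Z_N^\Lambda:=\sum_{S\in\mathcal{S}_N(\alpha)}\ind_{\{S\text{ open}\}}$. An initial Markov bound on $Z_N-Z_N^\Lambda$ gives
$$\bbP_p\bigl((Z_N-Z_N^\Lambda)/\bbE_p[Z_N]>\gd\bigr)\le\frac{s_N-s_N(\alpha)}{\gd\, s_N},$$
which tends to $0$ along the subsequence that realises the $\liminf$ in the definition of $\nu_d$. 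It therefore suffices to show that on this subsequence $Z_N^\Lambda/\bbE_p[Z_N^\Lambda]\to 0$ in probability.

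\textbf{Change of measure.} Introduce $\tilde\bbP_N$ under which each edge in $\Lambda_N$ is independently open with probability $\tilde p:=p(1-\gep_N)$, while edges outside $\Lambda_N$ keep probability $p$, for a small parameter $\gep_N\downarrow 0$ to be tuned. Then
$$\tilde\bbE_N[Z_N^\Lambda]=(1-\gep_N)^N\,\bbE_p[Z_N^\Lambda],$$
so the tilt yields an exponential gain. For the cost, the Radon-Nikodym density $g_N:=\dd\bbP_p/\dd\tilde\bbP_N$ is a product of $|\Lambda_N|$ i.i.d.\ factors; because the first derivative in $\gep$ of its $q$-th moment vanishes at $\gep=0$, a direct Taylor expansion shows that for every fixed $q>1$
$$\tilde\bbE_N[g_N^{\,q}]\le\exp\bigl(C(p,q)\,\gep_N^2\,N^{d\alpha}\bigr).$$

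\textbf{Conclusion via H\"older.} For $\theta\in(0,1)$, H\"older with conjugate exponents $q=1/(1-\theta)$ and $1/\theta$ gives
$$\bbE_p[(Z_N^\Lambda)^\theta]=\tilde\bbE_N[g_N(Z_N^\Lambda)^\theta]\le\tilde\bbE_N[g_N^{\,q}]^{1-\theta}\,\tilde\bbE_N[Z_N^\Lambda]^\theta.$$
Dividing by $\bbE_p[Z_N^\Lambda]^\theta$ and taking logarithms, the right-hand side is bounded by $-\theta\,\gep_N\,N(1+o(1))+(1-\theta)\,C(p,q)\,\gep_N^2\,N^{d\alpha}$. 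Choosing $\gep_N=N^{-\beta}$ with any $\beta\in(d\alpha/2,1)$ — a nonempty interval, since $d\alpha<2$ — the first summand diverges to $-\infty$ like $-\theta N^{1-\beta}$, while the second vanishes like $N^{d\alpha-2\beta}$. Hence $\bbE_p[(Z_N^\Lambda/\bbE_p[Z_N^\Lambda])^\theta]\to 0$ and a final Markov inequality closes the argument.

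\textbf{Main obstacle.} The delicate point is finding a window of scales $\gep_N$ in which both the tilt gain $\gep_N N$ and the Radon-Nikodym cost $\gep_N^2|\Lambda_N|$ are simultaneously controlled; such a window is nonempty precisely when $|\Lambda_N|\ll N^2$, which is exactly the content of $d\nu_d<2$. This same balance is what will force, in the full two-dimensional result, the use of a finer change of measure (introducing a multibody interaction) together with a coarse-grained decomposition, in order to bypass the absence of any \emph{a priori} confinement bound on the walk.
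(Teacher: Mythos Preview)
Your argument is essentially the paper's own: split off paths leaving the box, kill them by a first-moment bound, and treat the confined part via a fractional-moment/H\"older estimate after tilting down the edge density inside $[-N^\alpha,N^\alpha]^d$; the paper simply fixes $\theta=1/2$ (Cauchy--Schwarz) and $\gep_N=N^{-d\alpha/2}$, while you allow a range of $\theta$ and $\beta$, which is harmless. One small slip: there is no need to pass to a subsequence for the complement term, since $s_N(\alpha)/s_N\le 1$ always, so for any $\alpha>\nu_d$ the condition $\liminf s_N(\alpha)/s_N=1$ forces the full limit $s_N(\alpha)/s_N\to 1$; your change-of-measure part already works for every $N$, so the conclusion holds along the whole sequence as stated.
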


\begin{proof}
Choose $\alpha>\nu_d$ such that $d\alpha<2$ and set 

\begin{equation}
Z_N=\sum_{S\in \mathcal S_N(\alpha)}\ind_{S \text{ is open}}+\sum_{S\in \mathcal S_N\setminus \mathcal S_N(\alpha)}\ind_{S \text{ is open}}=:Z^{(1)}_N+
Z_N^{(2)}.
\end{equation}
By the definition of $\nu$ one has 
$$\bbE[Z_N^{(2)}]/\bbE[Z_N]=1-(s_N(\alpha)/s_N)\stackrel{N\to\infty}{\longrightarrow}0,$$
and hence $Z_N^{(2)}/\bbE[Z_N]$ tends to zero in probability.

\medskip

To show that $Z_N^{(1)}/\bbE[Z_N]$ also tends to zero in probability, we prove that

\begin{equation}\label{sratf}
\lim_{N\to \infty}\bbE\left[\sqrt{Z_N^{(1)}}\right]/\sqrt{\bbE[Z_N]}=0.
\end{equation}

We introduce a new measure $\tilde \bbP_N$ on the environment
which modifies the law of $\go$ inside the box $[-N^{\alpha},N^\alpha]^d$.
Under $\tilde \bbP_N$, the $\go(e)$ are still independent  Bernoulli variables but they are not identically distributed,
and the probability of being open is lower for edges in $[-N^{\alpha},N^\alpha]^d$, or more precisely
\begin{equation}
\tilde \bbP_N(\go(e)=1):=p'\ind_{e\in[-N^{\alpha},N^\alpha]^d}+p\ind_{e\notin[-N^{\alpha},N^\alpha]^d}
\end{equation}
where $e\in[-N^{\alpha},N^\alpha]^d$ means that both ends of $e$ are in $[-N^{\alpha},N^\alpha]^d$, and 
$$p'=p'(N,p):=p(1-N^{-d\alpha/2}).$$ 

The Radon--Nikodym derivative of $\tilde\bbP_N$ with respect to $\bbP$ is equal to
\begin{equation}\label{denschag}
\frac{\dd \tilde \bbP_N}{\dd \bbP}(\go):=\left(\frac{p'(1-p)}{(1-p')p}\right)^{\sum_{e\in[-N^{\alpha},N^\alpha]^d}\go(e)}
\left(\frac{1-p'}{1-p}\right)^{\#\{e\in[-N^{\alpha},N^\alpha]^d\}}.
\end{equation}
With this choice of $p'$, the probability law $\tilde \bbP_N$ is not too different from $\bbP$ (the total variation distance between the two is bounded away from one when $N$ tends to infinity), but $\tilde \bbE_N\left[Z_N^{(1)}\right]$ is much smaller than $\bbE[Z_N^{(1)}]$, and this is what is crucial to make our proof work.

\medskip

Using the Cauchy-Schwartz inequality 

\begin{equation}\label{CSSS}
\bbE\left[\sqrt{Z_N^{(1)}}\right]=\tilde \bbE_N\left[\left(Z_N^{(1)}\right)^{1/2}
\frac{\dd  \bbP}{\dd \tilde \bbP_N}(\go)\right]\le \left(\tilde \bbE_N\left[Z_N^{(1)}\right]\right)^{1/2}\left(\bbE\left[\frac{\dd \bbP}{\dd \tilde  \bbP_N}(\go)\right]\right)^{1/2}.
\end{equation}

For a path in $S\in \mathcal S_N(\alpha)$ we have $\tilde \bbP_N(S \text{ is open })=(p')^N$, and hence

\begin{equation}\label{abcd}
\tilde \bbE_N\left[Z_N^{(1)}\right]=(p')^N s_N(\alpha)\le (1-N^{-d\alpha/2})^N \bbE[Z_N]\le \exp(-N^{1-(d\alpha)/2})\bbE[Z_N].
\end{equation}
Moreover 
\begin{multline}\label{fghi}
\bbE\left[\frac{\dd \bbP}{\dd \tilde  \bbP_N}(\go)\right]=\left(\frac{p^2(1-p')+p'(1-p)^2}{p'(1-p')} \right)^{\#\{e\in[-N^{\alpha},N^\alpha]^d\}}
\\=\left[p(1-N^{-d\alpha/2})^{-1}+(1-p)\left(1+\frac{pN^{-\alpha d/2}}{1-p}\right)^{-1}\right]^{\#\{e\in[-N^{\alpha},N^\alpha]^d\}}
\\ 
\le \left[1+2\left(p+\frac{p^2}{1-p}\right)N^{-d\alpha}\right]^{\#\{e\in[-N^{\alpha},N^\alpha]^d\}}\\
\le \exp\left( d 2^{d+2}\left(p+\frac{p^2}{1-p}\right)\right).
\end{multline}
The first inequality above uses second order Taylor expansions of $(1\pm x)^{-1}$ and thus is valid for fixed $p$, when $N$ is sufficiently large.
The last inequality uses the fact that 
$$\#\{e\in[-N^{\alpha},N^\alpha]^d\}\le d 2^{d+1} N^{\alpha}.$$
Hence combining \eqref{CSSS} with \eqref{abcd} and \eqref{fghi} we obtain
\begin{equation}
\frac{ \bbE\left[\sqrt{Z_N^{(1)}}\right]}{\sqrt{\bbE[Z_N]}}\le C(d,p)\exp(-N^{1-(d\alpha)/2}/2).
\end{equation}
for some constant $C(d,p)$. As $1-(d\alpha)/2>0$ this shows that \eqref{sratf} holds and this concludes the proof.
\end{proof}

A drawback of the result presented in this section is that it is not even known rigorously that $\nu<1$ for $d=2$ so even in this case it cannot apply.
 Also -- and this is the most important point --  it does not give the exponential decay of $Z_N/ \bbE[Z_N]$, but only its convergence to zero in probability.

\medskip

To prove the exponential decay of $Z_N / \bbE[Z_N]$ we use the change of measure technique of the proof of the proposition above, but combine it with a coarse graining argument:
we divide the lattice in big cells of width $N_0$ and then apply a change of measure similar to the one in\eqref{denschag} to each cell. This technique will allow us to gain better information on the decay of $Z_N / \bbE_p[Z_N(\go)$. Changing the density of the open edges as in \eqref{denschag} is however not always sufficient, and we will have
to use a more subtle change of measure that induces negative correlation between the edges in one cell (Section \ref{bigm}).

\subsection{The fractional moment method and animal decomposition}\label{cganim}

Fractional moment is a technique extensively used by physicists that consists in estimating 
non-integer moments of a partition function in order to get information about it.
From now on we omit implicit dependences on $p$ in our notation when it does not affect understanding.

\medskip

In our case, the fractional moment method consists in saying that to prove our result \eqref{trucpricn}, it is sufficient to prove that
there exists $\theta\in (0,1)$ and $b<1$ such that, for $N$ large enough

\begin{equation}\label{trucfacil}
 \bbE\left[(Z_N)^{\theta}\right]\le b^{N\theta} \bbE\left[Z_N\right]^{\theta}=\left[s_N(bp)^N\right]^\theta.
\end{equation}
Indeed by the Borel--Cantelli Lemma (combined with the Markov inequality), \eqref{trucfacil} implies that a.s.\ for sufficiently large $N$
\begin{equation}\label{trocat}
 Z_N\le N^{2/\theta}(bp)^N s_N,
\end{equation}
so that passing to the $\limsup$ one gets 
\begin{equation}\label{tras}
\mu_2(p)\le bp\mu_2(1).
\end{equation}

\medskip

We consider the following coarse graining procedure which associates  a lattice-animal on a rescaled lattice with each path. 
Set 

\begin{equation}\label{nodef}
N_0:= \exp\left(\frac{C_2}{(1-p)^2}\right),
\end{equation}
where $C_2$ is a constant (independent of $p$) whose value will be fixed at the end of the proof,
and let us partition the set of edges $E_d$ into squares of side length $N_0$.
More precisely, let $r(e)$ denote the smaller end (for the lexicographical order on $\bbZ^2$) 
of an edge $e\in E_2$ , and for $x\in \bbZ^2$ 
set
\begin{equation}
I_x:=\{e\in E_2 \ | \ r(e)\in \left(N_0 x+[0,N_0)^2\right)\}.
\end{equation}
Now, one associates with each path $S$ the set of squares $I_x$ that it visits.
Set 
\begin{equation}
 A(S):=\{x\in \bbZ^2 \ | \ \exists n \in [0,N-1], (S_n,S_{n+1})\in I_x\}. 
\end{equation}

Note that $A(S)$ is a connected subset of $\bbZ^2$ that contains the origin (sometimes called a site-animal), and that 
\begin{equation}
\lceil N/N^2_0 \rceil \le |A(S)| \le 9\lceil N/N_0 \rceil.
\end{equation}
The upper bound comes from the fact that in $N_0$ steps, one cannot visit more than $9$ different $I_x$'s
(the one from which one starts and of its neighbors), whereas the lower bound simply uses the fact that there are only 
$N_0^2$ sites to visit in each square.
From now on, one drops the integer parts in the notation for simplicity.
Let $\mathfrak{A}_m$ be the set of  connected subsets of $\bbZ^2$ of size $m$ containing the origin, and $a_m:=|\mathfrak{A}_m|$.
For each animal $\mathcal A$ we set 
\begin{equation}
\mathcal S_N(\mathcal A):=\{S\in \mathcal S_N \ | \ A(S)=\mathcal{A}\}.
\end{equation}
Then one decomposes the partition function according to the contribution of each animal:
\begin{equation}\label{crcr}
 Z_N=\sum_{m=N/N^2_0}^{9((N/N_0)+1)}\sum_{\mathcal A \in \mathfrak{A}_m}\sum_{S\in \mathcal S_N(\mathcal A)} 
\ind\{\text{$S$ is open}\}=:\sum_{m=N/N^2_0}^{9((N/N_0)+1)}\sum_{\mathcal A \in \mathfrak{A}_m} Z_N(\mathcal A).
\end{equation}
We use the following trick: for any $\theta<1$ and any summable sequence of positive numbers $(a_n)_{n\in \bbN}$ one has
\begin{equation}
 (\sum_{n\in\bbN} a_n)^{\theta}\le \sum_{n\in \bbN} a_n^\theta.
\end{equation}
Thus applying this to \eqref{crcr} and averaging one gets

\begin{equation}
 \bbE\left[Z_N^{\theta}\right]\le \sum_{m=N/N^2_0}^{9((N/N_0)+1)}\sum_{\mathcal A \in \mathfrak{A}_m}
\bbE\left[Z_N(\mathcal A)^{\theta}\right].
\end{equation}

There are at most exponentially many animals of size $m$.
Here, we use the crude estimate $a_m\le 49^m$ (see e.g.\ \cite[(2.4) p. 81]{cf:G} for a proof: the definition
of lattice animal given there differs, but the bound still applies).
Hence 

\begin{equation}\label{efgrd}
  \bbE\left[Z_N^{\theta}\right]\le N \max_{m\in[N/N^2_0,9((N/N_0)+1)]}49^m\max_{\mathcal A \in \mathfrak{A}_m}\bbE\left[Z_N(\mathcal A)^{\theta}\right].
\end{equation}

Thus, in order to prove \eqref{trucfacil} it is 
sufficient to prove that \ $\bbE\left[Z_N(\mathcal A)^{\theta}\right]\le 100^{-m}(p^N s_N)^{\theta}$ for every $m$ and $\mathcal A$.
This is the key part of the proof.

\begin{proposition}\label{fundamental}
 For $\theta=1/2$ and  for every $\mathcal A\in \mathfrak{A}_m$ 
\begin{equation}\label{zactos}
 \bbE\left[Z_N(\mathcal A)^{\theta}\right]\le p^{N\theta} s^{\theta}_N 100^{-m},
\end{equation}
if the constant $C_2$ is chosen large enough.
\end{proposition}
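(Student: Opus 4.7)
The plan is to prove Proposition \ref{fundamental} by the fractional moment / change-of-measure method, applied cell by cell to the animal $\mathcal A$. For each $\mathcal A \in \mathfrak A_m$ I will design a probability law $\tilde\bbP_{\mathcal A}$ on the edge-dilution field which coincides with $\bbP$ outside $\bigcup_{x \in \mathcal A} I_x$ and factorizes as a product over the cells $I_x$, $x \in \mathcal A$. Cauchy--Schwarz then yields
\begin{equation*}
 \bbE\left[ Z_N(\mathcal A)^{1/2}\right] \le \tilde\bbE_{\mathcal A}\left[ Z_N(\mathcal A)\right]^{1/2} \, \bbE\left[\frac{\dd \bbP}{\dd \tilde\bbP_{\mathcal A}}\right]^{1/2},
\end{equation*}
and both factors split into products over the cells of $\mathcal A$. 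The goal is to calibrate the tilt on each cell so that the per-cell contribution to the right-hand side is at most $\gep^{1/2}$ for some small absolute $\gep$, which raised to the power $m$ and combined with the bound $\tilde\bbE_{\mathcal A}[Z_N(\mathcal A)] = \sum_{S\in \mathcal S_N(\mathcal A)}\tilde\bbP_{\mathcal A}(S \text{ is open}) \le \gep^m p^N s_N$ yields exactly \eqref{zactos}, provided $\gep \cdot 10^4 < 1$.

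A single universal tilt does not suffice, because the number of edges $k_x(S)$ of a path $S \in \mathcal S_N(\mathcal A)$ lying in the cell $I_x$ can range anywhere from $O(1)$ (the large-$m$ extreme, $m \sim N/N_0$) up to $O(N_0^2)$ (the small-$m$ extreme, $m \sim N/N_0^2$), and these two extremes call for radically different changes of measure. In the small-$m$ regime (Section \ref{smallm}), treated by a direct cell-by-cell repetition of the proof of Proposition \ref{fredo}, the density tilt $p \mapsto p(1-\gd)$ with $\gd$ of order one suffices: the per-cell gain $(1-\gd)^{k_x}$ is large because $k_x$ is of order $N_0^2$ on average, and dominates the Radon--Nikodym cost $\exp\bigl(O(\gd^2 N_0^2/(1-p))\bigr)$ per cell, exactly through the quadratic-in-$\gd$ structure of the second moment of the likelihood ratio. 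The threshold separating the two regimes will be some $m_\star$ of order $N/N_0$, fixed in the proof.

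The substantive work is in the large-$m$ regime (Section \ref{bigm}), where a path can cross a cell using only $O(N_0)$ edges, and the density tilt fails: to keep the Radon--Nikodym cost bounded one must take $\gd = O(1/N_0)$, after which the gain $(1-\gd)^{O(N_0)}=e^{-O(1)}$ per cell cannot beat the entropic factor $49^m$ from animal enumeration. I will instead tilt by a multilinear functional of the form $\exp\bigl(-\lambda \sum_\pi \ind_{\pi \text{ is open}}\bigr)$ on each cell $I_x$, where $\pi$ ranges over a carefully chosen family of short open self-avoiding pieces that can cross $I_x$. This induces \emph{negative correlations} between the open edges of one cell and discounts the expected number of crossings by a definite multiplicative factor, while the Radon--Nikodym cost remains $O(1)$ per cell thanks to a second-moment computation exploiting the fact that two edge-disjoint pieces contribute essentially independently. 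The large value $N_0 = \exp(C_2/(1-p)^2)$ chosen in \eqref{nodef} is needed precisely to absorb the $p$-dependent constants and the combinatorial enumeration of such pieces. I expect the variance estimate for this correlation-inducing tilt --- uniform in $\mathcal A$, and achieved without any nontrivial a priori control of the planar SAW --- to be the main obstacle, since the argument has to rely only on combinatorial worst-case bounds.
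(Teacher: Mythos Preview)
Your overall architecture matches the paper's: Cauchy--Schwarz with a product change of measure over cells, a density tilt for small $m$, and a correlation-inducing tilt for large $m$. But the parameters in the small-$m$ case are off, and the large-$m$ tilt you propose is not the one the paper uses and, as stated, does not work.

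Small $m$: the tilt $\gd$ cannot be of order one --- with $2N_0^2$ edges per cell the cost $\exp(O(\gd^2 N_0^2/(1-p)))$ would explode. The paper takes $(1-\gl)\sqrt{1-p}\,N_0=1$, i.e.\ $\gd\sim 1/(N_0\sqrt{1-p})$, so the cost is $O(1)^m$; the gain is the global factor $(1-c\gd(1-p))^N$, and the hypothesis $m\le N/[N_0(\log N_0)^{1/4}]$ converts $N$ into $mN_0(\log N_0)^{1/4}$, after which $N_0=\exp(C_2/(1-p)^2)$ gives a per-cell gain $\exp(-C_2^{1/4})$. Note that $k_x=N/m$ is only $\ge N_0(\log N_0)^{1/4}$ at the threshold, not $N_0^2$.

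Large $m$: this is the real gap. The paper does \emph{not} tilt by $\exp(-\gl\sum_\pi\ind_{\pi\text{ open}})$; it uses the centered quadratic form
\[
Q_x(\go)=\frac{1}{(1-p)N_0\sqrt{\log N_0}}\sumtwo{e,e'\in\bar I_x}{e\ne e'}\frac{1}{d(e,e')}(\go(e)-p)(\go(e')-p)
\]
with $f_x=\exp(-K\ind_{Q_x\ge e^{K^2}})$. The kernel $1/d(e,e')$ is the entire point: under $\bbP$ one gets $\var(Q_x)=O(1)$ because $\sum_{e'}d(e,e')^{-2}\lesssim\log N_0$ in two dimensions, while under $\bbP_S$ the mean shifts to $\gtrsim(1-p)\sqrt{\log N_0}=\sqrt{C_2}$ because any self-avoiding segment of length $N_0$ satisfies $\sum_{e,e'\in S^{(x)}}d(e,e')^{-1}\ge N_0\sum_{n=1}^{N_0-1}n^{-1}$. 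Your counting functional $W=\sum_\pi\ind_{\pi\text{ open}}$ does not achieve this separation: its fluctuations under $\bbP$ are governed by the bulk open-edge density in the cell (most pairs $\pi,\pi'$ share no edge yet are still positively correlated, so ``edge-disjoint pieces contribute independently'' is false for the variance), and conditioning a single path of $O(N_0)$ edges through a cell of $2N_0^2$ edges shifts $W$ by at most the order of its standard deviation --- already for $L=1$ the shift-to-fluctuation ratio is only $\sqrt{1-p}$, and larger $L$ does not help. The missing idea is to \emph{center} in each $\go(e)$, which kills the first-order density fluctuation, and then to weight pairs by $1/d$ so that the two-dimensional logarithm appears in the conditional mean but not in the variance.
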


The above proposition combined with equation \eqref{efgrd} implies that for $\theta=1/2$ 
\begin{equation}
 \bbE\left[Z_N^{\theta}\right]\le N p^{N\theta} s^{\theta}_N  2^{-N/N^2_0},
\end{equation}
which implies \eqref{trucfacil} for $b=5^{-1/N^2_0}$ ($5$ is chosen instead of $4$ to absorb the extra  $N$ factor appearing)  and $N$ large enough.
Thus from \eqref{trucfacil}, \eqref{trocat} and \eqref{tras} we get
\begin{equation}
\mu_2(p)=\limsup_{N\to \infty} (Z_N)^{1/N}\le 4^{-1/N_0^2} p\mu_2(1)<p\mu_2(1).
\end{equation}
Hence there exists a constant $c$ such that for all $p$
\begin{equation}\label{irrelvt}
 p\mu_2(1)-\mu_2(p)\ge \frac{c}{N^2_0}= c\exp\left(-\frac{2C_2}{(1-p)^2}\right).
\end{equation}

\subsection{Change of measure strategies}\label{measur}

Let us explain our strategy behing the proof of Proposition \ref{fundamental}.
It is based on a change a measure argument. The fundamental idea is that if $\bbE[\sqrt{Z_N(\cA)}]$ is much smaller than 
$\sqrt{\bbE[Z_N(\cA)]}$, it must be because there is a small set of $\go$ (of small $\bbP$ probability)
giving  the main contribution to $\bbE[Z_N(\cA)]$.
We want to introduce a random variable $f_{\cA}(\go)$ which takes a low value for these untypical environment.

\begin{lemma}\label{coaer}
For any $\mathcal A$ and any positive random variable $f_{\mathcal A}$ one has
\begin{equation}
 \bbE\left[Z_N(\mathcal A)^{1/2}\right]
\le \bbE\left[f_{\mathcal A}Z_N(\mathcal A)\right]^{1/2}
\left(\bbE\left[(f_{\mathcal A})^{-1}\right]\right)^{1/2}.
\end{equation}
\end{lemma}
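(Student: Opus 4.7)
The plan is to obtain this inequality as a direct consequence of the Cauchy--Schwarz inequality, by splitting $Z_N(\cA)^{1/2}$ into a product of two factors chosen so that squaring each yields the two quantities on the right-hand side. Concretely, I would write
\begin{equation*}
Z_N(\cA)^{1/2} = \bigl(f_{\cA}\, Z_N(\cA)\bigr)^{1/2} \cdot f_{\cA}^{-1/2},
\end{equation*}
which is legitimate because $f_{\cA}$ is strictly positive by assumption (so $f_{\cA}^{-1/2}$ is well-defined and finite $\bbP$-a.s.). Taking expectations and applying Cauchy--Schwarz to the two factors then gives exactly the stated bound.

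The proof has essentially no obstacles: the only conceptual point worth mentioning is why one would split $Z_N(\cA)^{1/2}$ in this particular way rather than in other ways that Cauchy--Schwarz would also permit. The motivation (already described in the paragraph preceding the lemma) is that the tilting factor $f_{\cA}(\go)$ should be small on the atypical environments that dominate $\bbE[Z_N(\cA)]$, so that $\bbE[f_{\cA} Z_N(\cA)]$ is much smaller than $\bbE[Z_N(\cA)]$, while the cost $\bbE[f_{\cA}^{-1}]$ remains of moderate size. This heuristic is what the subsequent sections will have to realize through an explicit construction of $f_{\cA}$; the lemma itself is just the abstract Hölder-type inequality that converts such a construction into a bound on the fractional moment.

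A minor point is that both sides of the inequality may be infinite, in which case the statement is vacuous; the nontrivial content is when $\bbE[f_{\cA} Z_N(\cA)]$ and $\bbE[f_{\cA}^{-1}]$ are both finite, and in that case the standard integrability requirements for Cauchy--Schwarz are automatic since $Z_N(\cA)$ is bounded (indeed $Z_N(\cA) \le s_N$ deterministically). I would therefore present the proof as a single display consisting of the factorization above followed by one application of Cauchy--Schwarz, and move on to the more substantive task of choosing $f_{\cA}$ in the subsequent subsections.
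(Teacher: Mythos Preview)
Your proposal is correct and matches the paper's proof exactly: the paper simply says ``Apply the Cauchy--Schwartz inequality to the product $(Z_N(\mathcal A)f_{\mathcal A})^{1/2}\times f_{\mathcal A}^{-1/2}$.'' Your additional remarks about motivation and integrability are accurate but not needed for the bare statement.
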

\begin{proof}
Apply the Cauchy--Schwartz inequality to the product $(Z_N(\mathcal A)f_{\mathcal A})^{1/2}\times f_{\mathcal A}^{-1/2}$.
\end{proof}

Note that if $f_{\mathcal A}$ has finite expectation, then it can be thought of as a probability density after renormalization, so 
this operation can indeed be interpreted  as a change of measure.
The main problem then is to find an efficient change of measure for which the cost of the change 
$\bbE\left[(f_{\mathcal A})^{-1}\right]$ is much smaller than the benefit 
one gets on $\bbE\left[f_{\mathcal A}Z_N(\mathcal A)\right]$.  

\medskip

In order to get an exponential decay in $m$ for $\bbE\left[Z_N(\mathcal A)^{1/2}\right]/\bbE\left[Z_N(\mathcal A)\right]^{1/2}$,
the good choice is to take $f_{\mathcal A}$ to be  a product over all $x\in \mathcal A$ of functions of the environment of each block $(\go_e)_{e\in I_x}$.

\medskip

One possibility is to diminish the intensity of open edges in $\cup_{x\in \mathcal A} I_x$, like in Section \ref{fredy}, simply by choosing
\begin{equation}
 f_{\mathcal A}(\go)\asymp \gl^{\# \text{ open edges }}
\end{equation}
for some $\gl<1$.
This turns out to be a good choice 
when the animal $\mathcal A$
considered is relatively small, but it does not give a good result when $m=|\mathcal A|$ is of order $N/N_0$, even after optimizing the value of $\gl$.
A more efficient strategy in that case is to induce negative correlation between the $\go(e)$ that decays with the distance instead of reducing 
the intensity of edge opening.

\medskip

This idea was first used in \cite{cf:GLT2}. There and in all related works, 
the induced negative correlations were chosen to be proportional 
to the Green function
of the underlying process (either a renewal process in \cite{cf:GLT2} or a directed random walk in \cite{cf:L,cf:YZ,cf:Z}). 
Here, the situation is a bit different
as one does not have any information on the underlying process:
 therefore the choice of correlation (i.e.\ of the coefficients  in the quadratic form
 $Q$ in equation \eqref{qudrtc}) is done via an optimization procedure,
 so that it lowers significantly the probability of $\bbP[S \text{ is open}]$ 
for every possible path (and not only the more probable ones).
\medskip

We adopt the first strategy when $m\le N/[N_0(\log N_0)^{1/4}]$, and the second one when $m> [N_0(\log N_0)^{1/4}]$.

\subsection{Proof of Proposition \ref{fundamental} for small values of $m$} \label{smallm}

In this section we assume that 
\begin{equation}\label{zerpis}
 m\le N/[N_0(\log N_0)^{1/4}].
\end{equation}
We choose to modify the environment in $$I_{\mathcal A}:=\bigcup_{x\in \mathcal A} I_x,$$
by augmenting the intensity of the edge dilution.
We choose the probability of an edge being open under the new measure to be equal to
$$p':=\frac{\gl p}{1-p(1-\gl)},$$ where $\gl<1$ is chosen such that
\begin{equation}\label{zerpi}
(1-\gl)\sqrt{1-p}N_0=1.
\end{equation}
As there are $2N^2_0$ edges in each block $I_x$, the density function corresponding to this change of measure is given by

\begin{equation}\label{smallf}
 f_{\mathcal A}:=\frac{\gl^{\#\{ \text{ open edges in } I_{\mathcal A}\}}}{[1-p(1-\gl)]^{2mN^2_0}}.
\end{equation}
Then, we can estimate the cost of the change of measure
\begin{multline}\label{stimac}
 \bbE[f_{\mathcal A}^{-1}]= \left[\left(1+\frac{p}{\gl}(1-\gl)\right)(1-p(1-\gl))\right]^{2mN^2_0}\\
=\left(1+\frac{p}{\gl}(1-p)(1-\gl)^2\right)^{2mN^2_0}\le \exp\left(\frac{2p}{\gl}m\right)\le 10^m,
\end{multline}
where for the first inequality we use $\log(1+x)\le x$ and the identity \eqref{zerpi}, and for the second one, the 
fact that $\gl$ is close to one (if $C_2$ is chosen large enough).

\medskip

The probability that a path of length $N$ 
whose edges are all in $I_{\mathcal A}$ is open under the modified measure is 
equal to $[\gl p/(1-p(1-\gl)]^N$, so 

\begin{equation}\label{roselmc}
 \bbE[f_{\mathcal A}Z_N(\mathcal A)]=|\mathcal S_N(\mathcal A)|p^N\left(\frac{\gl}{1-p(1-\gl)}\right)^N.
\end{equation}
Then we remark that $|\mathcal S_N(\mathcal A)|\le s_N$ and that 
\begin{multline}
 \left(\frac{\gl}{1-p(1-\gl)}\right)^N=\left(1-\frac{(1-\gl)(1-p)}{1-p(1-\gl)}\right)^N
 \le \exp\left(-N(1-\gl)(1-p)\right)\\
  \stackrel{\eqref{zerpis}}{\le}
 \exp\left(-m N_0 (\log N_0)^{1/4}(1-\gl)(1-p)\right)\\ \stackrel{\eqref{zerpi}}{=}\exp\left(-m(\log N_0)^{1/4}\sqrt{1-p}\right)\stackrel{\eqref{nodef}}{=}\exp(-m C^{1/4}_2).
\end{multline}
Hence, if $C_2$ is large enough,
\begin{equation}
 \bbE[f_{\mathcal A}Z_N(\mathcal A)]\le s_N p^N \exp(-m C^{1/4}_2)\le s_N p^N 10^{-5m}.
\end{equation}
Combining this with \eqref{stimac} and Lemma \ref{coaer}, we get \eqref{zactos}.
\qed

\medskip

\begin{rem}\rm
Let us now comment on why we believe that \eqref{irrelvt} is suboptimal.
The idea is that in fact, if the typical scaling of a self-avoiding path is $N^{\nu}$,
then (at heuristic level) the typical number $m$ of blocks visited  is of order $N/(N_0^{1/\nu})$,
and the argument above works for a much lower value of $N_0$ (of order $(1-p)^{\frac{2\nu}{2(1-\nu)}}$), giving then a much better upper bound for all
$\mu_2(p)$.
Bringing this kind of argument to a rigorous ground would require 
very detailed knowledge of the behavior of the self-avoiding 
walk.

\end{rem}

\subsection{Proof of Proposition \ref{fundamental} for large values of $m$} \label{bigm}

Even when trying to optimize over the value of $\gl$ or when taking a much larger value for $N_0$
the preceding method fails when the size of the animals is of order 
$N/N_0$, and we have to apply a different method in this case. 
Throughout this section we will assume that 
\begin{equation}\label{zerpos}
 m> N/[N_0(\log N_0)^{1/4}].
\end{equation}

Our proof is still based on Lemma \ref{coaer}, but
the construction of our $f_{\mathcal A}$ is a bit more complicated in this case, 
and for notational convenience
we do not normalize it: it is not a probability density.
\medskip

First, given an animal $\mathcal A$ of size $m$, one can extract a set of vertices $\bar {\mathcal A}$ 
of size $m/13$ such that the vertices of
$\mathcal A$ have disjoint $l_{\infty}$ neighborhood, i.e.\ such that
\begin{equation}
 \forall x,y \in \bar {\mathcal A},\quad  x\ne y, \text{ and } |x-y|_{\infty} \ge 3,
\end{equation}
where $|x|_{\infty}=\max(|x_1|,|x_2|)$.

\medskip

For instance, one can construct $\bar {\mathcal A}$ by picking vertices in $\mathcal A$ iteratively as follows:
at each step we pick the smallest available vertex according to the lexicographical order in $\bbZ^2$, 
and make all the vertices 
at a $l_\infty$ distance  $2$ or less of this vertex unavailable for future picks. 
As at most $13$ vertices are made unavailable at each step, we can keep 
this procedure going 
for $m/13$ steps to get $\bar{\mathcal A}$.
For $x\in \bbZ^2$ set 
\begin{equation}
 \bar I_x:=I_x\cup\left(\bigcup_{\{ y\ | \ |y-x|_{\infty}=1\}} I_x\right). 
\end{equation}
We define the distance $d$ between edges to be the Euclidean distance between their midpoints.
Given $K$ a (large) constant, we define $f_x$ to be a function of $\go$, that depends only on $\go|_{\bar I_x}$:
first set $Q_x$ to be the following random quadratic form 
\begin{equation}\label{qudrtc}
Q_x(\go):=\frac{1}{(1-p) N_0\sqrt{\log N_0}}\sumtwo{e,e'\in  \bar I_x}{e'\ne e}\frac{1}{d(e,e')}(\go(e)-p)(\go(e')-p),
\end{equation}
and then define
\begin{equation}\label{fxxx}
 f_x(\go):=\exp\left(-K\ind_{\{Q_x(\go)\ge e^{K^2}\}}\right).
\end{equation}
Finally,  set 
\begin{equation}\label{faaa}
f_{\mathcal A}(\go):=\prod_{x\in \bar {\mathcal A}}  f_x(\go).
\end{equation}
In order to use Lemma \ref{coaer}, one needs to bound  $\bbE\left[(f_{\mathcal A})^{-1}\right]$ from above.

\begin{lemma}\label{coaer2}
If $K$ is chosen sufficiently large, then for every $A\in \mathfrak{A}_m$, we have
\begin{equation}
   \bbE\left[(f_{\mathcal A})^{-1}\right]\le 2^{m/13}.
\end{equation}
\end{lemma}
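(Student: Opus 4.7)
The proof plan rests on two observations: first, that the extracted set $\bar{\mathcal A}$ makes the random variables $f_x$ independent across $x \in \bar{\mathcal A}$, and second, that the normalization of $Q_x$ has been chosen precisely so that $Q_x$ has variance of order one, uniformly in $N_0$.

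The first step is to verify that the blocks $\{\bar I_x\}_{x \in \bar{\mathcal A}}$ are pairwise disjoint. Indeed, $\bar I_x$ is the union of $I_y$ over the nine $y$ with $|y-x|_\infty \le 1$; if $x,y \in \bar{\mathcal A}$ are distinct, then $|x-y|_\infty \ge 3$, which forces the two $3\times 3$ blocks of indices to be disjoint, hence $\bar I_x \cap \bar I_y = \emptyset$. Since $f_x$ depends only on $\go|_{\bar I_x}$, the family $(f_x)_{x \in \bar{\mathcal A}}$ is independent under $\bbP$, and therefore
\begin{equation*}
\bbE\bigl[(f_{\mathcal A})^{-1}\bigr]
= \prod_{x \in \bar{\mathcal A}} \bbE[f_x^{-1}]
= \prod_{x \in \bar{\mathcal A}} \Bigl(1 + (e^K-1)\,\bbP[Q_x \ge e^{K^2}]\Bigr).
\end{equation*}
Since $|\bar{\mathcal A}| = m/13$, it suffices to show that each factor is at most $2$.

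The second step is the second-moment estimate. Because $(\go(e)-p)_{e \in \bar I_x}$ are independent centered random variables with $\bbE[(\go(e)-p)^2] = p(1-p)$, the expectation $\bbE[(\go(e_1)-p)(\go(e_1')-p)(\go(e_2)-p)(\go(e_2')-p)]$ vanishes unless $\{e_1,e_1'\}=\{e_2,e_2'\}$, which gives
\begin{equation*}
\var(Q_x) \;=\; \frac{2 p^2(1-p)^2}{(1-p)^2 N_0^2 \log N_0} \sumtwo{e,e' \in \bar I_x}{e \ne e'} \frac{1}{d(e,e')^2}.
\end{equation*}
A routine estimate on the number of edges at a given distance (annular shells) gives $\sum_{e'\ne e} d(e,e')^{-2} = O(\log N_0)$ uniformly in $e$, and summing over the $O(N_0^2)$ choices of $e$ yields $\sum_{e\ne e'} d(e,e')^{-2} = O(N_0^2 \log N_0)$. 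The factor $(N_0 \sqrt{\log N_0})^{-2}$ in the normalization exactly cancels this, so $\var(Q_x) \le C$ for a numerical constant $C$.

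With $\bbE[Q_x] = 0$ and $\var(Q_x) \le C$, Chebyshev's inequality yields $\bbP[Q_x \ge e^{K^2}] \le C e^{-2K^2}$, so each factor is bounded by $1 + C e^{K - 2K^2}$, which is $\le 2$ as soon as $K$ is chosen large enough (the quadratic $-2K^2$ in the exponent dominates the linear $+K$). Combining, $\bbE[(f_{\mathcal A})^{-1}] \le 2^{m/13}$. The only subtlety is the variance bound: one must be careful that the sum $\sum 1/d(e,e')^2$ diverges only logarithmically (this is where the two-dimensional geometry enters), and that $p(1-p) \le 1/4$ absorbs the $p$-dependent prefactors so that the constant $K$ can be chosen independently of $p$ (though its sufficient size is what will later fix the constant $C_2$ in $N_0$).
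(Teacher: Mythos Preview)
Your proof is correct and follows essentially the same route as the paper: factorize using the disjointness of the $\bar I_x$ for $x\in\bar{\mathcal A}$, compute $\bbE[f_x^{-1}]=1+(e^K-1)\bbP[Q_x\ge e^{K^2}]$, bound $\var(Q_x)$ by a universal constant via the two-dimensional estimate $\sum_{e'\ne e}d(e,e')^{-2}=O(\log N_0)$, and conclude by Chebyshev. Your write-up is in fact slightly more explicit than the paper's (you track the factor~$2$ from the two orderings $\{e_1,e_1'\}=\{e_2,e_2'\}$ and note $p(1-p)\le 1/4$ to make the bound uniform in~$p$).
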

\begin{proof}

The function $f_{\mathcal A}(\go)$ is a product of $m/13$ IID random variables ($f_x(\go)$, $x \in \bar{\mathcal A}$)
(since our choice for $\bar{\mathcal A}$, the blocks $(\bar I_x)_{x\in \bar{\mathcal A}}$
are disjoint).
Thus

\begin{equation}
 \bbE\left[(f_{\mathcal A})^{-1}\right]=\bbE\left[(f_{0})^{-1}\right]^{m/13}.
\end{equation}

It remains to prove that $\bbE\left[(f_{0})^{-1}\right]\le 2$, and for this purpose
it is sufficient to estimate the variance of $Q_0(\go)$.
First note that $\bbE[Q_0(\go)]=0$, and that only the diagonal terms of the double 
sum that appears when expanding $Q^2_0(\go)$ contribute to the second 
moment.
Note also that the maximal distance between two edges in $\bar I_0$ is less than $5 N_0$, so

\begin{multline}\label{zrz}
 \bbE[Q_0(\go)^2]=\frac{1}{(1-p)^2 N_0^2\log N_0}\sumtwo{e,e'\in  \bar I_0}{e'\ne e}
\frac{1}{d(e,e')^2}p^2(1-p)^2\\
\le 
\frac{1}{N^2_0\log N_0}\sum_{e\in \bar I_0}\sum_{\{e'\ne e | d(e,e')\le 5 N_0 \}}\frac{1}{d(e,e')^2}
\le C_1,
\end{multline}
 where $C_1$ is a universal constant that is independent of $p$ and $N_0$.
Thus, by the Chebytcheff inequality, if $K$ is large enough (independently of all parameters of the problem)
\begin{equation}
  \bbE\left[\left(f_{0}(\go)\right)^{-1}\right]= 1+(e^{K}-1)\bbP\left[Q_0(\go)\ge e^{K^2}\right]
\le 1+C_1(e^K-1)e^{-2K^2}\le 2.
\end{equation}

\end{proof}

It remains to estimate $\bbE\left[f_{\mathcal A}Z_N(\mathcal A)\right]$, which is the more delicate part.
We do so by bounding uniformly the contribution of each path.

\begin{lemma}\label{crdar}
For any $S\in \mathcal S_N(\mathcal A)$, we have
\begin{equation}\label{crdareq}
 \bbE\left[f_{\mathcal A}(\go)\ind_{\{ S \text{ is open }\}}\right]
=p^N  \bbE\left[f_{\mathcal A}(\go)\ | \ S \text{ is open }\right]\le 
p^ N 20000^{-m}.
\end{equation}
\end{lemma}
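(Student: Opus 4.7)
Since $\bbP(S \text{ is open}) = p^N$, the equality on the left of \eqref{crdareq} is just the definition of conditional expectation, and the task reduces to establishing $\bbE[f_{\mathcal A}(\go) \mid S \text{ is open}] \le 20000^{-m}$. Under this conditioning, the edges of $S$ are deterministically set to $1$ while the remaining edges stay i.i.d.\ Bernoulli$(p)$. Since the blocks $(\bar I_x)_{x \in \bar{\mathcal A}}$ are pairwise disjoint by the construction of $\bar{\mathcal A}$, and each $f_x$ is measurable with respect to $\go|_{\bar I_x}$, the variables $(f_x)_{x \in \bar{\mathcal A}}$ remain conditionally independent. As $|\bar{\mathcal A}| = m/13$, it therefore suffices to prove
\begin{equation*}
\bbE[f_x \mid S \text{ is open}] \le 2 e^{-K} \quad \text{for every } x \in \bar{\mathcal A},
\end{equation*}
and then pick $K$ a large enough numerical constant so that $(2e^{-K})^{1/13} \le 1/20000$.

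Fix $x \in \bar{\mathcal A}$ and set $E_{S,x} := \{e \in S : e \in \bar I_x\}$. Expanding \eqref{qudrtc} using $\go(e) - p = 1-p$ for $e \in E_{S,x}$ yields $Q_x = Q_x^{\mathrm{det}} + Q_x^{\mathrm{rand}}$ with
\begin{equation*}
Q_x^{\mathrm{det}} := \frac{1-p}{N_0 \sqrt{\log N_0}} \sum_{\substack{e \ne e' \\ e, e' \in E_{S,x}}} \frac{1}{d(e,e')} = \bbE[Q_x \mid S \text{ is open}]
\end{equation*}
deterministic under the conditioning, and $Q_x^{\mathrm{rand}}$ conditionally centered. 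The strategy is to show that $Q_x^{\mathrm{det}}$ is much larger than $e^{K^2}$ while the standard deviation of $Q_x^{\mathrm{rand}}$ is much smaller, forcing $\{Q_x \ge e^{K^2}\}$ with overwhelming conditional probability. The geometric input is that, since $x \in \mathcal A$, the path $S$ visits the central sub-block $I_x$, which lies at $\ell_\infty$-distance $N_0$ from the complement of $\bar I_x$; hence $S$ performs at least one self-avoiding excursion of length $\ge N_0$ inside $\bar I_x$. Since the Euclidean distance between two edges at walk-distance $k$ along a self-avoiding sub-path is bounded by $k$, this yields
\begin{equation*}
\sum_{\substack{e \ne e' \\ e, e' \in E_{S,x}}} \frac{1}{d(e,e')} \ge \sum_{1 \le i < j \le N_0} \frac{1}{j-i} \ge c\, N_0 \log N_0,
\end{equation*}
and therefore $Q_x^{\mathrm{det}} \ge c(1-p)\sqrt{\log N_0} = c\sqrt{C_2}$ by \eqref{nodef}, which exceeds $2 e^{K^2}$ as soon as $C_2 \ge 4 e^{2K^2}/c^2$.

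To control the fluctuations, I would decompose $Q_x^{\mathrm{rand}}$ into its linear and purely quadratic parts in the free variables $\{\go(e) : e \in \bar I_x \setminus E_{S,x}\}$, and bound each using the universal estimate $\sum_{e' \in \bar I_x} d(e,e')^{-2} \le C \log N_0$ that already underlies \eqref{zrz}. This produces $\mathrm{Var}(Q_x \mid S \text{ is open}) \le C' p(1-p) |E_{S,x}|^2/N_0^2$. To absorb the $|E_{S,x}|$-dependence of the variance, one needs a matching lower bound on $Q_x^{\mathrm{det}}$ when $|E_{S,x}|$ is large, obtained by applying the walk-distance argument to each excursion and using the trivial bound $1/d(e,e') \ge 1/\mathrm{diam}(\bar I_x)$ for cross-excursion pairs; this yields $Q_x^{\mathrm{det}} \ge c(1-p)|E_{S,x}|^2/(N_0^2 \sqrt{\log N_0})$ when $|E_{S,x}| \gg N_0$. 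In either regime the squared signal-to-noise ratio is bounded below by a constant multiple of $C_2/[p(1-p)]$, hence arbitrarily large once $C_2$ is large. Chebyshev's inequality then gives
\begin{equation*}
\bbP[Q_x < e^{K^2} \mid S \text{ is open}] \le \frac{4\,\mathrm{Var}(Q_x \mid S \text{ is open})}{(Q_x^{\mathrm{det}})^2} \le e^{-K},
\end{equation*}
so $\bbE[f_x \mid S \text{ is open}] \le e^{-K} + (1 - e^{-K}) e^{-K} \le 2 e^{-K}$ as required. The main obstacle is the two-sided geometric control of $Q_x^{\mathrm{det}}$: the whole scheme rests on the fact that the $3 N_0$-thick buffer structure of $\bar I_x$ compels $S$ to execute a macroscopic self-avoiding excursion inside, and one needs both the walk-distance-based and the diameter-based lower bounds on $\sum 1/d(e,e')$ in order to beat the variance term $\sim |E_{S,x}|^2/N_0^2$ uniformly over the (otherwise unknown) configuration of $S$ inside $\bar I_x$.
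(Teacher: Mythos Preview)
Your overall architecture (factorize over $\bar{\mathcal A}$, show $Q_x$ has large conditional mean and controlled conditional variance, apply Chebyshev) matches the paper, but there is a genuine gap in the variance-versus-mean comparison. You attempt to control \emph{every} $x\in\bar{\mathcal A}$ regardless of $|E_{S,x}|$, and your claimed uniform bound on the squared signal-to-noise ratio is false. At the crossover scale $|E_{S,x}|\sim N_0\sqrt{\log N_0}$, your two lower bounds on $Q_x^{\det}$ (the walk-distance one and the diameter one) coincide and give only $Q_x^{\det}\ge c(1-p)\sqrt{\log N_0}=c\sqrt{C_2}$, while your variance bound yields
\[
\var_{\bbP_S}(Q_x)\;\ge\; c'\,p(1-p)\,\frac{|E_{S,x}|^2}{N_0^2}\;\sim\; c'\,p(1-p)\log N_0\;=\;\frac{c'\,p\,C_2}{1-p}.
\]
Hence $(Q_x^{\det})^2/\var_{\bbP_S}(Q_x)\sim (1-p)/p$, which tends to $0$ as $p\to 1$ and cannot be made large by choosing $C_2$ large. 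So your claim that the ratio is bounded below by a constant multiple of $C_2/[p(1-p)]$ is incorrect at this scale, and the Chebyshev step fails for such blocks.

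Relatedly, you never invoke the standing hypothesis \eqref{zerpos}, and this is precisely the missing ingredient. The paper does \emph{not} try to control every block: it restricts to the subset $\bar A(S)=\{x\in\bar{\mathcal A}:|S\cap\bar I_x|\le 30N_0(\log N_0)^{1/4}\}$, uses \eqref{zerpos} (which forces $N\le mN_0(\log N_0)^{1/4}$) together with disjointness of the $\bar I_x$ to conclude $|\bar A(S)|\ge m/30$, and bounds the remaining factors trivially by $1$. On $\bar A(S)$ the variance stays $O(\sqrt{C_2})$ while the mean is $\ge\sqrt{C_2}$, so Chebyshev gives the required smallness with $C_2=4e^{2K^2}$. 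The fix to your argument is exactly this: drop the blocks with large $|E_{S,x}|$ rather than trying to beat the variance there.
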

Lemma \ref{crdar}, combined with the trivial bound $|\mathcal S_N(\mathcal A)|\le s_N$ gives
\begin{equation}
  \bbE\left[f_{\mathcal A}(\go)Z_N(\mathcal A)\right]\le p^N s_N 20000^{-m},
\end{equation}
so that together with Lemmata \ref{coaer} and \ref{coaer2} one obtains
\begin{equation}
   \bbE\left[Z_N(\mathcal A)^{1/2}\right]\le p^{N/2} s_N^{N/2} 100^{-m},
\end{equation}
which proves Proposition \ref{fundamental}.

\begin{proof}[Proof of Lemma \ref{crdar}]
Note that even after conditioning on $S$ being open, $f_{\mathcal A}(\go)$ is still a product of 
independent variables (though the $f_x(\go)$ are not identically distributed any more), so that
\begin{equation}\label{trtrsi}
\bbE\left[f_{\mathcal A}(\go)\ | \ S \text{ is open } \right]=
\prod_{x\in \bar {\mathcal A}} \bbE\left[ f_x(\go)\ |  \ S \text{ is open }\right].
\end{equation}

Our idea is then to show that most of the terms in the product $\bbE\left[ f_x(\go)\ |  \ S \text{ is open }\right]$ are small.
We do so by showing that conditioning on the event $\{S \text{ is open }\}$ makes the expectation of
$Q_x$ large whereas its variance remains relatively small. The problem is that 
both the expectation and  the variance of $Q_x(\go)$ may grow when additional edges are conditioned on being open, and things become difficult
to control when the number of edges that $S$ visits in the block $\bar I_x$ is much larger than $N_0$.
This is the reason why we restrict the use of this method to large values of $m$:
we show that $\bbE\left[ f_x(\go)\ |  \ S \text{ is open }\right]$ is small only for blocks 
for which the number of edges visited by $S$ is not too large.

\medskip

Set
\begin{equation}\label{baras}
 \bar A(S):=\big\{x\in \bar{\mathcal A} \ | \ |S\cap \bar I_x|\le 30 N_0(\log N_0)^{1/4} \big\},
\end{equation}
where $S$ is considered as a set of edges.
As the total number of edges in $S$ is $N\le m N_0 (\log N_0)^{1/4}$ and the $\bar I_x$ are disjoint,
we have
\begin{equation}
  |\bar {\mathcal A}\setminus\bar A(S)|\le \frac{N}{30 N_0(\log N_0)^{1/4}}\le m/30,
\end{equation}
and hence $| \bar A(S)|\ge m/30$.

Thus from \eqref{trtrsi}, and using the fact that all the terms in the product are smaller than one, 
to prove \eqref{crdareq} it is enough to prove that 
for each $x\in \bar A(S)$ we have
\begin{equation}
 \bbE\left[f_{x}(\go)\ | \ S \text{ is open}\right]\le 20000^{-30}.
\end{equation}
Assume in the rest of the proof that $x\in \bar A(S)$.
The definition of $f_x$ gives
\begin{equation}
 \bbE\left[f_{x}(\go)\ | \ S \text{ is open }\right]= 
e^{-K}+(1-e^{-K})\bbP\left[Q_{x}(\go)< e^{K^2}\ | \ S \text{ is open }\right],
\end{equation}
and thus if $K$ is chosen large enough, it is sufficient to prove that  
\begin{equation}\label{vsde}
\bbP\left[Q_{x}(\go)< e^{K^2}\ | \ S \text{ is open }\right]\le 20000^{-31}.
\end{equation}
To obtain such an estimate, it is sufficient to compute the two first moments of $Q_x(\go)$ under the conditioned measure. 
To keep the notation light, we write $\bbP_S$ for $\bbP\left[\ \cdot \ | \ S \text{ is open }\right]$.

\medskip

To show that the first moment is large, one needs to extract a long path of adjacent edges.
Set $S^{(x)}$ to be a path of length $N_0$ defined as follows:
\begin{itemize}
 \item If $x=0$ then 
$(S^{(0)}_n)_{n\in[0,N_0]}:=(S_n)_{n\in[0,N_0]}$ 
\item For all other values of  $x$,
 one sets  $\tau_x$ be the first time that $S$ hits $I_x$ (note that $\tau_x\ge N_0$), and one defines $(S^{(x)}_n)_{n\in [0,N_0]}:=(S_{n+\tau_x-N_0})_{n\in [0,N_0]}$. 
\end{itemize}
Note that $S^{(x)}$ has all its edges in $\bar I_x$.
 
 \medskip

Under the measure $ \bbP_S$, the $\go(e)$  are independent, equal to one 
if $e\in S$, and distributed as Bernoulli variables of parameter $p$ otherwise. 
Thus
\begin{equation}\label{crucru}
  \bbE_S\left[Q_{x}(\go)\right]=\frac{(1-p)}{N_0\sqrt{\log N_0}}
\sumtwo{e, e'\in S\cap \bar I_x}{e\ne e'}\frac{1}{d(e,e')}\ge \frac{(1-p)}{N_0\sqrt{\log N_0}}\sumtwo{e, e'\in S^{(x)}}{e\ne e'}\frac{1}{d(e,e')}.
\end{equation}

Now, for every edge $e\in S^{(x)}$, as the trajectory $S^{(x)}$ cannot move faster than ballistically we have
\begin{equation}
 \sum_{e'\in S^{(x)}\setminus\{e\}}\frac{1}{d(e,e')}\ge  \sum_{n=1}^{N_0-1} \frac{1}{n}
\ge \log N_0.
\end{equation}
Therefore
\begin{equation}\label{cranch}
   \bbE_S\left[Q_{x}(\go)\right]\ge (1-p)\sqrt{\log N_0}.
\end{equation}

\medskip

Let us now bound from above the variance  
$\var_{\bbP_S}\left(Q_{x}(\go)\right)$. 
Most of the terms in the resulting sum appear in the non-conditioned case, so we have to check that the additional terms generated 
by the conditioning only give a small contribution. Indeed,
\begin{multline}
    \var_{\bbP_S}\left[Q_{x}(\go)\right]
=\frac{1}{(1-p)^2 N_0^2\log N_0}\sumtwo{e,e'\in  \bar I_x\setminus{S}}{e'\ne e}
\frac{p^2(1-p)^2}{d(e,e')^2}\\+ \frac{4}{(1-p)^2 N_0^2\log N_0}
\sum_{e\in  \bar I_x\setminus S}p(1-p)^3
\left(\sum_{e'\in S\cap \bar I_x}\frac{1}{d(e',e)}\right)^2.
\end{multline}
The first term in the r.h.s.\ is less than $\var_{\bbP}(Q_x)$ 
(it is the same sum as in Equation \eqref{zrz}, with some missing terms)
and thus is bounded above by $C_1$.
Using the Cauchy-Schwartz inequality, we can bound the second term from above as follows:
\begin{multline}\label{grillo}
\sum_{e\in \bar I_x\setminus S} \left(\sum_{e'\in S\cap \bar I_x}\frac{1}{d(e',e)}\right)^2\le |\bar I_x \cap S|
\sum_{e\in \bar I_x\setminus S} \sum_{e'\in S\cap \bar I_x}\frac{1}{d(e',e)^2}\\
\le
 |\bar I_x \cap S| \sum_{e'\in |\bar I_x\setminus S|} \sum_{\{e\ne  e'\  | \ d(e,e')\le 5N_0\}}\frac{1}{d(e',e)^2}\le 
 C_1  |\bar I_x \cap S|^2 \log N_0.
\end{multline}
Thus  for $x\in \bar {A} (S)$, (recalling \eqref{baras})
\begin{equation}\label{troiqrsi}
     \var_{\bbP_S}\left(Q_{x}(\go)\right)\le C_1(1+3600(1-p)(\log N_0)^{1/2})
\end{equation}
Recall that $N_0:= \exp(\frac{C_2}{(1-p)^2})$, 
and set $$C_2:=4e^{2K^2}.$$ Then
\begin{equation}
    \bbE_S\left[Q_{x}(\go)\right]\ge (1-p)\sqrt{\log N_0}=\sqrt{C_2}= 2e^{K^2}.
\end{equation}
Hence, by the Chebytcheff inequality and \eqref{troiqrsi},
\begin{multline}
 \bbP_S\left(Q_{x}(\go)\le e^{K^2}\right)
 \le  \bbP_S\left(|Q_{x}(\go)- \bbE_S\left[Q_{x}(\go)\right]|\ge e^{K^2}\right)\\
 \le   
\var_{\bbP_S}\left(Q_{x}(\go)\right)e^{-2K^2}\le 8000C_1e^{-K^2},
\end{multline}
which proves \eqref{vsde} if $K$ is large enough, and ends the proof.
\end{proof}

\section{Starting from a supercritical percolation cluster: proof that\\
$p\mapsto \mu_2(p)/(p\mu_2(1))$ is strictly increasing} \label{monnno}\label{newsec}

In the previous section, the proof does not make very much use of the fact that the lattice is $\bbZ^2$, but only the fact that it is two dimensional.
In order to prove that $\mu_2(p)/(p\mu_2(1)$ is strictly increasing, what we have to do is replicate the same proof, but starting with a dilute lattice 
instead of $\bbZ^2$. The reader can note that the proof would adapt to any connected sub-lattice of $\bbZ^2$ or any ``nice" two dimensional lattice.

\medskip

Consider $p<p'$, both in the interval $(p_c,1)$. We couple two percolation environments $\go_p$ and $\go_{p'}$ as we did in Section \ref{tootoo}.
With this coupling, the percolation process with parameter $p$ is obtained by performing percolation with parameter
$q:=p/p'$ on the non-connected lattice
$
\mathcal G_{p'}
$
whose vertices are the same as those of $\bbZ^2$ but whose set of edges is 
$$\mathcal E_p':=\{ e\in E_2 \ | \ \go_{p'}(e)=1\}.$$
As in the $p'=1$ case, it is sufficient to show that for all realizations of $\go_{p'}$

\begin{equation}\label{orrer}
 \bbE[ (Z_N(\go_p))^{1/2} \ | \ \mathcal F_{p'}]\le \left(b^N{q}^N Z_N(\go_{p'})\right)^{1/2}.
\end{equation}
Indeed, using the Borel--Cantelli Lemma, \eqref{orrer} implies that almost surely

\begin{equation}
 \limsup_{N\to \infty} (Z_N(\go_p))^{1/N}<q^N \limsup_{N\to \infty} (Z_N(\go_{p'}))^{1/N},
\end{equation}
and hence
\begin{equation}
 \frac{\mu_2(p)}{p}< \frac{\mu_2(p')}{p'}.
\end{equation}

For the rest of this section we use the structure of the proof  of the case $p'=1$ (the previous section) to prove \eqref{orrer}.
We use the notation $Z^p_N$  for $Z_N(\go_p)$,
and in the proof we have to replace  $\bbZ^2$ by $\mathcal G_{p'}$, $p$ by $q$, $\bbE$ by $\bbE[ \cdot \ | \ \mathcal F_{p'}]$, $s_N$ by $Z^{p'}_N$ and keep in mind that ``open" means
``open for $\go_p$".  We detail only the points where the modifications are not trivial.

\medskip

We set $N_0:=\exp(C_2/(1-q)^2)$ and  after performing our coarse graining
we can  reduce the proof of \eqref{orrer} to proving the inequality
\begin{equation}\label{zact}
 \bbE\left[Z^p_N(\mathcal A)^{\theta}  \ | \ \mathcal F_{p'} \right]\le q^{N\theta} (Z^{p'}_N)^{\theta} 100^{-m}.
\end{equation}
We prove it using Lemma \ref{coaer}; we just need to specify our choice of $f_{\mathcal A}$.

\subsection{The case $m\le N/ (N_0(\log N_0)^{1/4})$}

Set
$$I_x:= \{e\in \mathcal E_{p'}\  | \ r(e)\in N_0x+[0,N_0)^2\}. $$
With this modification the cardinality $| I_{\mathcal A}|$ is not always equal to  $2mN^2_0$; it depends on the realization of $\go_{p'}$. For this reason, for small animals,  we use the following definition for $f_{\mathcal A}$ (instead of \eqref{smallf})
\begin{equation}
f_{\mathcal A}:=\frac{\gl^{\#\{ \text{ open edges for $\go_{p}$ in $I_{\mathcal A}$ }\}}}{(1-q(1-\gl))^{| I_{\mathcal A}|}}
\end{equation}
As $| I_{\mathcal A}|\le 2mN^2_0$, \eqref{stimac} remains valid with this change of definition. Equation \eqref{roselmc} is replaced by
\begin{equation}
\bbE[ f_{\mathcal A} Z^p_N(\mathcal A) \ | \ \mathcal F_{p'}]=Z^{p'}_N(\mathcal A)q^N\left(\frac{\gl}{1-q(1-\gl)}\right)^N,
\end{equation}
which allows us to conclude that
\begin{equation}
\bbE[ f_{\mathcal A} Z^p_N(\mathcal A) \ | \ \mathcal F_{p'}]\le Z^{p'}_Nq^N 10^{-5m},
\end{equation}
and to derive \eqref{zact} from Lemma \ref{coaer}.

\subsection{The case of $m> N/ (N_0(\log N_0)^{1/4})$}

For large animals, we have to check that we can still control the variance and expectation of $Q_x$ (defined with the modified version of $\bar I_x$ and 
$q$ instead of $p$)
when we start with a dilute lattice.
The fact that $I_x$ contains fewer edges only makes the variance of $Q_x$ smaller. Indeed
the sums are made on a subset of the indices. Hence \eqref{zrz} and Lemma \ref{coaer2} are still valid (even if the $f_x$ are independent but not identically distributed).

\medskip

Lemma \ref{crdar} is replaced by 

\begin{lemma}

For every $S\in \mathcal S_N(\mathcal A)$ which is open for $\go_{p'}$, we have
 \begin{equation}\label{crdareq2}
 \bbE\left[f_{\mathcal A}(\go_p)\ind_{\{ S \text{ is open for $\go_p$}\}}\  | \ \mathcal F_{p'}\right]\le 
q^ N 20000^{-m}.
\end{equation}
\end{lemma}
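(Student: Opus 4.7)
The plan is to reproduce the proof of Lemma \ref{crdar} with $p$ replaced everywhere by $q=p/p'$, $\bbP$ replaced by the conditional probability $\bbP[\,\cdot\,|\,\mathcal F_{p'}]$, and $\bbZ^2$ replaced by the dilute lattice $\mathcal G_{p'}$, and to verify that the presence of the background environment $\go_{p'}$ only makes the computation easier. First I would peel off the factor $q^N$: since the event $\{S \text{ open for } \go_{p'}\}$ is $\mathcal F_{p'}$-measurable and, under the coupling, the variables $(\go_p(e))_{e \in \mathcal E_{p'}}$ are i.i.d. Bernoulli$(q)$ conditionally on $\mathcal F_{p'}$, one has
\begin{equation*}
\bbE\bigl[f_{\mathcal A}(\go_p)\ind_{\{S\text{ open for }\go_p\}} \,\big|\, \mathcal F_{p'}\bigr] = q^N\, \bbE\bigl[f_{\mathcal A}(\go_p) \,\big|\, \mathcal F_{p'},\, S\text{ open for }\go_p\bigr],
\end{equation*}
so it suffices to bound the last conditional expectation by $20000^{-m}$.

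The next step is the product decomposition. With the modified definition of $\bar I_x$ (intersected with $\mathcal E_{p'}$), the blocks $(\bar I_x)_{x\in\bar{\mathcal A}}$ remain disjoint, and under the conditional law $\bbP_S:=\bbP[\,\cdot\,|\,\mathcal F_{p'},\, S\text{ open for }\go_p]$ the edge variables $\go_p(e)$ are still independent. Hence $\bbE_S[f_{\mathcal A}(\go_p)] = \prod_{x\in\bar{\mathcal A}}\bbE_S[f_x(\go_p)]$. Every factor is at most one, so retaining only $x\in\bar A(S)$ (defined as in \eqref{baras} but with respect to the dilute lattice) suffices. The pigeonhole estimate $|\bar A(S)|\geq m/30$ is unchanged since it only uses $|S|=N \le m N_0(\log N_0)^{1/4}$.

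What is left, as in the proof of Lemma \ref{crdar}, is to show that for $x\in\bar A(S)$,
\begin{equation*}
\bbP_S\bigl[Q_x(\go_p) < e^{K^2}\bigr] \le 20000^{-31},
\end{equation*}
where $Q_x$ is now defined using the factor $1/((1-q)N_0\sqrt{\log N_0})$ and summing over pairs of distinct edges in $\bar I_x\cap \mathcal E_{p'}$. I would obtain this via Chebyshev, by reproducing the two moment bounds. For the first moment, the crucial input is the length-$N_0$ ballistic sub-path $S^{(x)}$: because $S$ is open for $\go_{p'}$, \emph{all} edges of $S^{(x)}$ lie in $\mathcal E_{p'}\cap\bar I_x$, so the computation \eqref{crucru}--\eqref{cranch} with $q$ in place of $p$ yields $\bbE_S[Q_x]\geq (1-q)\sqrt{\log N_0} = 2e^{K^2}$ with the choice $C_2 := 4e^{2K^2}$ and $N_0 = \exp(C_2/(1-q)^2)$. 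For the variance, the sums in \eqref{zrz} and \eqref{grillo} now run over sub-collections of the edges appearing in the full-lattice computation, so all terms are positive and fewer, and the bound $\var_{\bbP_S}(Q_x)\le C_1\bigl(1+3600(1-q)(\log N_0)^{1/2}\bigr)$ remains valid.

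Chebyshev then closes the argument for $K$ chosen large enough, independently of $\go_{p'}$. The only point requiring a moment of care — and the sole conceptual obstacle — is verifying that the ballistic segment $S^{(x)}$ is genuinely realized inside the dilute lattice $\mathcal G_{p'}$; this is immediate from the hypothesis that $S$ is open for $\go_{p'}$. Everything else is a direct transcription of the proof of Lemma \ref{crdar}, with the replacements indicated above.
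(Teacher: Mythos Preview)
Your proposal is correct and follows essentially the same approach as the paper, which simply remarks that \eqref{cranch} and \eqref{troiqrsi} remain valid under the replacements you describe (the first because $S^{(x)}\subset\mathcal E_{p'}$ when $S$ is open for $\go_{p'}$, the second because the variance sums run over fewer edges). You have spelled out in full the details that the paper only sketches.
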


Equations \eqref{cranch} and \eqref{troiqrsi} are still valid if 
$\bbP_S$ is replaced by ``$\bbP[ \cdot | \mathcal F_{p'}]$ conditioned to $S$ being open for $\go_p$": for \eqref{cranch} the estimate remains the same, and for
 \eqref{troiqrsi} we just have a sum on a smaller family of edges. Hence the proof is exactly as for Lemma~\ref{crdar}.

\section{The Self-avoiding walk in a Random Potential}\label{omega}

Percolation is just one type of random potential in which one can make $S$ evolve but there are many other possibilities.
We introduce in this section a general self-avoiding walk in a random potential and show that at every temperature the partition function grows exponentially slower than its expectation. 
Let $\go(x)$ be a collection of IID random variables of zero mean and unit variance, indexed  by sites of $\bbZ^2$ (let $\bbP$ denote their joint law)
satisfying
$$e^{\gl(\gb)}:=\bbE\left[e^{\gb \go(0)}\right]<\infty$$ 
for all $\gb\in \bbR$.

 The Self-avoiding Walk in a Random Potential is the stochastic process whose law is given by 
 the probability measure on $\mathcal S_N$ for which each path $S$ has a probability proportional to
\begin{equation}
 \Pi_{\go}(S)=\exp\left(\gb \sum_{n=0}^N\go(S_n)\right).
\end{equation}
Physically, $(-\go)$ corresponds to an energy attached to each site, and $\gb$ is the inverse temperature.
We are interested in the growth rate of $Z_N$ the partition function of this model defined  by
\begin{equation}
 Z_N(\gb,\go):=\sum_{S\in \mathcal S_N} \Pi_{\go} (S).
\end{equation}

\begin{theorem}
For any $\gb\ne 0$, we have
\begin{equation}
\limsup_{N\to \infty}  (Z_N)^{1/N} < \limsup_{N\to \infty} \bbE\left[ Z_N \right]^{1/N}= \mu_2 e^{\gl(\gb)}.
\end{equation}
\end{theorem}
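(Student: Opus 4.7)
The plan is to transcribe the fractional-moment / coarse-graining / change-of-measure scheme of Section \ref{pprroof} to this setting; the architecture is insensitive to whether the random medium lives on edges or on vertices, and to whether it is Bernoulli or real-valued. Since for any $S\in\mathcal S_N$ the vertices $S_0,\dots,S_N$ are distinct, independence gives $\bbE[\Pi_\go(S)]=e^{(N+1)\gl(\gb)}$ and hence $\bbE[Z_N]=s_N e^{(N+1)\gl(\gb)}$, identifying the annealed constant as $\mu_2 e^{\gl(\gb)}$. By the Borel--Cantelli step of \eqref{trocat}--\eqref{tras} it suffices to find $\theta\in(0,1)$ and $b<1$ with $\bbE[Z_N^\theta]\le b^{N\theta}\bbE[Z_N]^\theta$ for large $N$. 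Partitioning $\bbZ^2$ into vertex blocks $I_x$ of side $N_0:=\exp(C_2(\gb))$ with $C_2(\gb)$ a large constant to be fixed, decomposing $Z_N=\sum_m\sum_{\mathcal A\in\mathfrak A_m}Z_N(\mathcal A)$ by the site-animal $A(S)$ of visited blocks, and using $(\sum_n a_n)^\theta\le\sum_n a_n^\theta$, one reduces as in \eqref{efgrd} to the per-animal bound
$$\bbE\!\left[Z_N(\mathcal A)^{1/2}\right]\le s_N^{1/2}\,e^{N\gl(\gb)/2}\,100^{-m}.$$

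For the small-animal regime $m\le N/[N_0(\log N_0)^{1/4}]$, in place of reducing the opening intensity as in \eqref{smallf} I apply an exponential tilt of the vertex field: choose $\gd$ with $\sign(\gd)=-\sign(\gl'(\gb))$ and $|\gd|N_0=1$, and set
$$f_\mathcal A(\go):=\prod_{v\in V_\mathcal A}\frac{e^{\gd\go(v)}}{e^{\gl(\gd)}},\qquad V_\mathcal A:=\bigcup_{x\in\mathcal A}I_x.$$
A Taylor expansion of $\gl$ at zero, using $\gl(0)=\gl'(0)=0$ and $\gl''(0)=1$, gives $\bbE[f_\mathcal A^{-1}]=e^{(\gl(\gd)+\gl(-\gd))|V_\mathcal A|}\le e^{\gd^2|V_\mathcal A|(1+o(1))}\le 10^m$, matching \eqref{stimac}. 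For any $S$ with $A(S)=\mathcal A$ all $N+1$ visited vertices lie in $V_\mathcal A$, so
$$\bbE\bigl[f_\mathcal A(\go)\Pi_\go(S)\bigr]=e^{(N+1)(\gl(\gb+\gd)-\gl(\gd))}.$$
Since $\gl'(\gb)\ne 0$ for $\gb\ne 0$ (strict convexity of $\gl$ together with $\gl'(0)=0$), Taylor's theorem at $\gd=0$ gives $\gl(\gb+\gd)-\gl(\gd)-\gl(\gb)=\gd\gl'(\gb)+O(\gd^2)\le -c(\gb)|\gd|$ for small $|\gd|$; multiplying by $N\ge mN_0(\log N_0)^{1/4}$ and using $|\gd|=1/N_0$ yields a gain of order $\exp(-c(\gb)m(\log N_0)^{1/4})$, which beats $10^{5m}$ once $C_2(\gb)$ is large enough, and Lemma \ref{coaer} closes this case.

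For the large-animal regime $m>N/[N_0(\log N_0)^{1/4}]$ I retain the $\bar{\mathcal A}$ construction and replace \eqref{qudrtc} by the centered quadratic form in the vertex field,
$$Q_x(\go):=\frac{1}{N_0\sqrt{\log N_0}}\sumtwo{v,v'\in\bar I_x}{v\ne v'}\frac{\go(v)\go(v')}{d(v,v')},$$
(the prefactor $(1-p)$ of \eqref{qudrtc} is absorbed because $\var(\go)=1$). The variance bound \eqref{zrz} and the Chebyshev step of Lemma \ref{coaer2} use only $\bbE[\go]=0$ and $\var(\go)=1$ and transfer verbatim; finiteness of all exponential moments of $\go$ ensures $\bbE[f_0^{-1}]<\infty$ for large enough $K$. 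The substantive step is Lemma \ref{crdar}: rather than conditioning on $\{S\text{ open}\}$, I weight by $\Pi_\go(S)$, under which the $\go(v)$ remain independent --- those on $S$ being exponentially tilted, so of mean $\gl'(\gb)\ne 0$ and bounded variance, and those off $S$ unchanged. An adjacent sub-path $S^{(x)}$ of length $N_0$ then contributes a deterministic first moment to $Q_x$ of order $|\gl'(\gb)|\sqrt{\log N_0}$, in direct analogy with \eqref{cranch} and with $|\gl'(\gb)|$ playing the role of $(1-p)$; the extra variance generated by the tilt is controlled by the same Cauchy--Schwarz step \eqref{grillo}, yielding the analogue of \eqref{troiqrsi} and then \eqref{vsde} via Chebyshev, once $C_2(\gb)$ is chosen so that $|\gl'(\gb)|\sqrt{C_2(\gb)}\ge 2e^{K^2}$.

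The main difficulty I anticipate lies in the variance estimate for $Q_x$ under the tilted law: the unconditional bound \eqref{zrz} heavily used $|\go(e)-p|\le 1$, whereas here one has only finite exponential moments of $\go$. I expect this to be handled by writing $\go(v)=\tilde\go(v)+\gl'(\gb)\mathbf 1_{v\in S}$ with $\tilde\go$ centered and of uniformly bounded variance (by smoothness of $\gl$ in a neighborhood of $\gb$, itself guaranteed by $e^{\gl(\gb)}<\infty$ for all $\gb\in\bbR$), expanding $Q_x$ into a deterministic part plus a centered random part, and controlling cross-terms via $|\tilde\go(v)\tilde\go(v')|\le\tfrac12(\tilde\go(v)^2+\tilde\go(v')^2)$ so as to reduce everything to second-moment estimates. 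Once this bookkeeping is in place, the rest of Section \ref{pprroof} transfers verbatim and the strict inequality follows.
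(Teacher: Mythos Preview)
Your proposal is correct and follows the paper's strategy closely: coarse-grain into vertex blocks of side $N_0$, split by animal size, apply an exponential tilt for small animals and a quadratic-form change of measure for large animals. Two points deserve comment.

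The paper takes one extra step that you omit: before any coarse-graining it uses an FKG argument (the $\bbP_S$ of \eqref{bbps} is a product measure, so increasing and decreasing functions are negatively correlated) to show that $\gb\mapsto\bbE[(Z_N e^{-N\gl(\gb)})^{1/2}]$ is non-increasing, hence it suffices to treat small $\gb$. This lets the paper fix $N_0=\exp(C_2\gb^{-4})$ with a universal $C_2$ and use the clean approximations $\gl'(\gb)\sim\gb$, $\gl''(\gb)\sim 1$. Your route of working directly at general $\gb$ with $\gb$-dependent constants $C_2(\gb)$ is also valid, since all moments of the tilted variables are finite for each fixed $\gb$; the FKG reduction simply buys tidier bookkeeping.

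There is a small slip in your large-animal mean computation. Because you drop the $(1-p)^{-1}$ prefactor of \eqref{qudrtc}, the conditional mean of $Q_x$ under the tilt is $\gl'(\gb)^2\sqrt{\log N_0}$, not $|\gl'(\gb)|\sqrt{\log N_0}$: for $v,v'\in S$ distinct one has $\bbE_S[\go(v)\go(v')]=\gl'(\gb)^2$, and there is no normalizing factor left to cancel one power (in \eqref{cranch} it is precisely the $(1-p)^{-1}$ in \eqref{qudrtc} that reduces $(1-p)^2$ to $(1-p)$). This only shifts the required size of $C_2(\gb)$ and is harmless. Also, your worry that \eqref{zrz} ``heavily used $|\go(e)-p|\le 1$'' is misplaced: that bound only uses $\bbE[(\go(e)-p)^2(\go(e')-p)^2]=p^2(1-p)^2$ by independence, and the identical step with $\bbE[\go(v)^2\go(v')^2]=1$ goes through here without any boundedness assumption, so the decomposition you propose is not needed for the unconditioned variance.
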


As in the percolation case, we remark that it is sufficient to show that 
\begin{equation}\label{partifunk}
\bbE\left[\left(Z_N(\gb,\go)(s_N)^{-1} e^{-N\gl(\gb)}\right)^{1/2}\right],
\end{equation}
decays exponentially fast in $N$.
We prove \eqref{partifunk} with the same line of proof as Theorem~\ref{superteo}. We do not reproduce the parts of the proof that are identical to the percolation case.
Without loss of generality we assume $\gb>0$ in the proof.
The first thing we will do is to restrict ourselves to the case of small $\gb$.
We do so by using the FKG inequality, similarly to what is done in \cite{cf:CY}. Afterwards we use the animal decomposition, and
bound the contribution of each animal by using change of measure.

\subsection{Restriction to small $\gb$}

We show in this section that when $N$ is fixed 
the quantity \eqref{partifunk} is a non-increasing function of $\gb$. For this, we follow \cite[Lemma 3.3 (b)]{cf:CY}, and show that the derivative in $\gb$ is non-positive: 

\begin{multline}
\partial_\gb\bbE\left[\left(Z_N(\gb,\go) e^{-N\gl(\gb)}\right)^{1/2}\right]
\\=\frac{1}{2}\sum_{S\in \mathcal S_N} \bbE\left[\left(Z_N(\gb,\go) e^{-N\gl(\gb)}\right)^{-1/2}
\left[\left(\sum_{n=0}^N\go(S_n)\right)-N\gl'(\gb)\right]\Pi_{\go} (S)e^{-N\gl(\gb)}\right].
\end{multline}
For a fixed $S$, the measure $\bbP_S$ defined by
\begin{equation}\label{bbps}
\bbP_S(\dd \go):=\Pi_{\go} (S)e^{-N\gl(\gb)}\bbP(\dd \go)
\end{equation}
 is a product probability measure, and hence satisfies the FKG inequality \cite[p. 78]{cf:Lig}.
As $\left(Z_N(\gb,\go) e^{-N\gl(\gb)}\right)^{-1/2}$ and $\left[\left(\sum_{n=0}^N\go(S_n)\right)-N\gl'(\gb)\right]$ are respectively decreasing and increasing functions of $\go$, one has
\begin{multline}
\bbE\left[\left(Z_N(\gb,\go) e^{-N\gl(\gb)}\right)^{-1/2}
\left[\left(\sum_{n=0}^N\go(S_n)\right)-N\gl'(\gb)\right]\Pi_{\go} (S)e^{-N\gl(\gb)}\right] \\
\le \bbE_S\left[\left(Z_N(\gb,\go) e^{-N\gl(\gb)}\right)^{-1/2}\right]\bbE_S\left[
\left(\sum_{n=0}^N\go(S_n)\right)-N\gl'(\gb)\right]=0,
\end{multline}
where the last equality is due to the fact that $\bbE_S[\go(x)]=\gl'(\gb)\ind_{x\in S}$. 
In what follows we will always consider that $\gb\le \gb_0$ is small enough (how small will depend on the law of $\go$).

\subsection{Coarse graining}
Fix $N_0:=\exp(C_2\gb^{-4})$. Similarly to what is done in Section~\ref{cganim}, we can reduce \eqref{partifunk} to the proof of a statement analogous to Proposition \ref{fundamental} controling the contribution of each animal.
 We want to prove that
for every $\mathcal A\in \mathfrak{A}_m$, we have
\begin{equation}\label{zactos2}
 \bbE\left[Z_N(\mathcal A)^{1/2}\right]\le  s^{1/2}_N e^{N\gl(\gb)/2} 100^{-m},
\end{equation}
where 
$$Z_N(\mathcal A):=\sum_{S\in \mathcal S_N(\mathcal A)} \Pi_{\go} (S).$$
We will prove it using Lemma \ref{coaer} and appropriate functions $f_{\cA}$, 
separating the cases  $m\le N/[N_0(\log N_0)^{1/4}]$ and $m> N/[N_0(\log N_0)^{1/4}]$.

\subsection{The case $m\le N/[N_0(\log N_0)^{1/4}]$}

In the first case we set
 \begin{equation}
  f_{\mathcal A}(\go):= \exp\left(-\left(\sum_{x\in I_{\mathcal A}} \delta\go_x\right)-m N^2_0\gl(-\gd))\right),
 \end{equation}
 where the definition of $I_{\mathcal A}$ and $I_x$ has been adapted so that they are sets of points rather than sets of edge ($I_x:=\bbZ^2\cap [0,N_0)^2$ contains $N^2_0$ points), 
and $\gd=\gd_{N_0}:=(1/N_0)$. This corresponds to an exponential tilt of the $\go$ inside $I_{\mathcal A}$.

\medskip

Then, provided that $N_0$ is large enough (hence for $\gb$ small enough) one has
\begin{equation}\label{reon}
\bbE\left[f_{\mathcal A}^{-1}\right]=\exp\left(mN_0^2 \left[\gl(-\gd)+\gl(\gd)\right]\right)\le \exp(mN_0^2 2 \gd^2)=e^{2m},
\end{equation}
where we used that $\gl(\gd)\sim_{0} \gd^2/2$ from the assumption on the variance of $\go$.

\medskip

On the other hand,
\begin{multline}\label{rearf}
 \bbE\left[Z_N(\mathcal A)f_{\mathcal A}\right]=|\mathcal S_N(\mathcal A)| \exp(N[\gl(\gb-\delta)-\gl(-\delta)])\\
 \le s_N e^{N\gl(\gb)} \exp\left(-m N_0(\log N_0)^{1/4} [ \gl(\gb)-\gl(\gb-\delta)+\gl(-\delta)]\right),
 \end{multline}
where the last inequality uses our assumption $m\le N/[N_0(\log N_0)^{1/4}]$ and the facts $[ \gl(\gb)-\gl(\gb-\delta)-\gl(-\delta)]>0$, $|\mathcal S_N(\mathcal A)|\le s_N$. 
Moreover by the mean value theorem (used twice), there exists $\gd_1\in [0,\delta]$ and $\gb_1\in[-\delta_1,\gb-\delta_1]$ such that
\begin{equation}
\gl(\gb)-\gl(\gb-\delta)+\gl(-\delta)-\gl(0)=\gd\left(\gl'(\gb-\delta_1)-\gl'(-\delta_1)\right)=\gd\gb \gl''(\gb_1).
\end{equation}
As $\gl''(s)$ tends to one at zero, $ \gl''(\gb_1)\ge 1/2$ if $\gb$ is small enough; hence

\begin{multline}\label{reac}
\exp\left(-m N_0(\log N_0)^{1/4} [ \gl(\gb)-\gl(\gb-\delta)+\gl(-\delta)]\right)\\
\le e^{-mN_0(\log N_0)^{1/4}\delta\gb/2}\le e^{-mC_2^{1/4}/2}\le 10^{-5m}.
\end{multline}
and one can conclude that \eqref{zactos2} holds by combining \eqref{reon}, \eqref{rearf}, \eqref{reac} and Lemma~\ref{coaer}.

\subsection{Case $m> N/[N_0(\log N_0)^{1/4}]$}

Let us now move to the case of large $m$.
We introduce a quadratic form 
\begin{equation}
Q_x(\go):=\frac{1}{N_0\sqrt{\log N_0}}\sumtwo{z,z'\in \bar I_x}{z\ne z'}\frac{1}{d(z,z')}\go(z)\go(z'),
\end{equation}
where $d$ denote the Euclidean distance
and $f_x$ and $f_{\mathcal A}$ are defined as in \eqref{fxxx} and \eqref{faaa}.
Then there is no problem in proving the equivalent of Lemma \ref{coaer2}, just by controlling the variance of $Q_x$ as follows
\begin{equation}
\bbE[Q^2_x(\go)]\le \frac{1}{N_0^2\log N_0}\sum_{z\in \bar I_x}\sum_{\{z' | d(z,z')\le 5N_0\}} \frac{1}{d^2(z,z')}\le C_1.
\end{equation}

Instead of Lemma \ref{crdar}, we must then prove that for any $S$ in $\mathcal S_N(\mathcal A)$ and $x\in \bar {\mathcal A}$ such that 
$|S\cap \bar I_x|\le 30 N_0(\log N_0)^{1/4}$ (recalling \eqref{bbps}) we have
\begin{equation}\label{cbpf}
\bbE[\Pi_\go(S)f_{\mathcal A}(\go)]=e^{N\gl(\gb)}\bbE_S\left[f_{\mathcal A}(\go)\right]\le e^{N\gl(\gb)}20000^{-m},
\end{equation}
which is proved by controlling the mean and variance of $Q_x(\go)$ under $\bbP_S$.
The computations are almost the same as for the proof of Lemma \ref{crdar}. One notices that

\begin{equation}
\bbE_S[\go(z)]=\gl'(\gb)\ind_{z\in S} \text{ and } \var_{\bbP_S}[\go(z)]=\gl''(\gb)\ind_{z\in S}+\ind_{z\notin S}.
\end{equation}
In what follows, we consider $\gb$ small enough so that 
$$\gb/2\le \gl'(\gb)\le 2\gb \quad \text{ and } \quad  \gl''(\gb)+\gl'(\gb)^2\le 2$$ 
(recall that $\gl''(0)=1$ from the unit variance assumption). 

One obtains in the same fashion as \eqref{cranch} that
\begin{equation}\label{esperan}
\bbE_S[Q_x(\go)]=\frac{\gl'(\gb)^2}{N_0\sqrt{N_0}}\sumtwo{z,z'\in S\cap \bar I_x}{z\ne z'} \frac{1}{d(z,z')}\ge \frac{\gb^2}{4}\sqrt{\log N_0}.
\end{equation}
Now, we have to control the variance under $\bbP_S$.
\begin{multline}
\var_{\bbP_S}[Q_x(\go)]=\frac{1}{N_0^2\log N_0}\sumtwo{z_1,z_2\in \bar I_x}{z_1\ne z_2}\sumtwo{z_3,z_4\in \bar I_x}{z_3\ne z_4}\frac{1}{d(z_1,z_2)d(z_3,z_4)}\cov_{\bbP_S}(\go_{z_1}\go_{z_2}, \go_{z_3}\go_{z_4})\\
\le \frac{2}{N_0^2\log N_0}\sumtwo{z,z'\in \bar I_x}{z\ne z'}\frac{1}{d(z,z')^2}\bbE_{\bbP_S}[\go^2_{z}\go^2_{z'}]
\\
+  \frac{4\gl'(\gb)^2}{N_0^2\log N_0}\sumtwo{z\in \bar I_x,\ z',z"\in (S\cap \bar I_x)}{z\notin\{z', z''\}}\frac{1}{d(z,z')d(z,z'')}\bbE_S[\go^2_z].
\end{multline}
The two terms on the r.h.s.\ correspond to the two cases where the covariance \\
$\cov_{\bbP_S}(\go_{z_1}\go_{z_2}, \go_{z_3}\go_{z_4})$ is non-zero:
either $z_1=z_3$, $z_2=z_4$ (or $z_1=z_4$, $z_2=z_3$) which is counted in the first term, 
or $z_1=z_3$ and $z_2\ne z_4$ but $z_2$ and $z_4$ belong to $S$ (and three other cases obtained by permutation of the indices).
In each case we have bounded the covariance from above by neglecting to subtract the product of expectations $\bbE_S[\go_{z_1}\go_{z_2}]\bbE_S[ \go_{z_3}\go_{z_4}]$, which is always positive.

\medskip

Note that, with our assumptions, in the sum above we have $\bbE_{\bbP_S}[\go^2_{z}\go^2_{z'}]\le 4$ and $\bbE_S[\go^2_z]\le 2$.
Hence, the first term is smaller than
\begin{equation}
 \frac{8}{N_0^2\log N_0}\sum_{z\in \bar I_x}\sum_{\{z'\ne z\ |\ |z-z'|\le 5N_0\}}\frac{1}{d(z,z')^2}\le 8C_1.
\end{equation}
For the second term, as in \eqref{grillo}  one gets
\begin{equation}
 \sumtwo{z\in \bar I_x,\ z',z"\in (S\cap \bar I_x)}{z\notin\{z', z''\}}\frac{1}{d(z,z')d(z,z'')}= \sum_{z\in \bar I_x} \left(\sumtwo{z'\in S\cap \bar I_x}{z'\ne z}\frac{1}{d(z,z')}\right)^2\le |S\cap \bar I_x|^2 C_1\log N_0,
 \end{equation}
so that the second term is smaller than
$32 C_1\gb^2 (|S\cap \bar I_x|/N_0)^2$.

\medskip

Hence, when $|S\cap \bar I_x|\le 30 N_0 (\log N_0)^{1/4}$ we have
\begin{equation}\label{varian}
\var_{\bbP_S}[Q_x(\go)]=C_1(8+28800 \gb^2 (\log N_0)^{1/2}).
\end{equation}
Equations \eqref{esperan} and \eqref{varian} allow us to prove \eqref{cbpf}, just as \eqref{cranch} and \eqref{troiqrsi} are used to prove Lemma \ref{crdar}. 

\section{Some other models to which the proof can adapt}\label{export}

Note that our proof, though technical, did not use many specifics of the model.
The key point that makes the proof work is that we can find $Q_x$ for which the variance is bounded but whose expectation under $\bbE_S$ diverges with $N_0$.
This is where the crucial fact that the lattice is $2$-dimensional is used.
For this reason, our result extends readily to any kind of two-dimensional lattice (e.g.\ the triangular lattice, the honeycomb lattice, lattices with spread-out connections).
Moreover, the proof would also work with only minor modification for a large variety of $2$-dimensional models, an example of which was given in the last section.
Without trying to describe a meta-model that would include all of these models, we give here two further examples that could be of interest.

\subsection{Site Percolation}
We consider the equivalent of the model studied in the core  of this paper, but with the disorder $(\go(x))_{x\in \bbZ^2}$ lying on the sites of $\bbZ^2$
rather than on the edges. One says that a self-avoiding path is open if all the sites visited by the path are open. One can readily check  that 
 one can prove the existence of the quenched connective constant (with $\limsup$) and the fact that it 
differs from the annealed one for every $p$ using exactly the same arguments. One can furthermore adapt Section \ref{monnno} to show that  the ratio of the two connective constants is an increasing function of $p$.

\subsection{Lattice trees/Lattice animals on a dilute network}

A lattice tree of size $N$ on~$\bbZ^2$ is a finite connected subgraph of $\bbZ^2$ with $N$ vertices and no cycles.
Let $\mathcal T_N$ denote the number of (unlabeled) lattice trees of size $N$ containing the origin in $\bbZ^2$ and $t_n=|\mathcal T_N|$.
It is known (see e.g.\ \cite{cf:Kl}, where lattice trees are studied as a model for branching polymers) that there exists a constant $\gl$ such that
\begin{equation}
 \lim_{N\to \infty} (t_N)^{\frac 1 N}= \gl.
\end{equation}
The method presented in Section \ref{pprroof} also allows us to give an upper bound on the number of lattice trees present on an infinite percolation.
We say that a lattice tree is open if all of its the edges are open.
Given a realization $(\go(e))_{e\in E_2}$  of the edge dilution process, one defines

\begin{equation}
 Z_N:=\sum_{T\in \mathcal T_N} \ind_{\{T \text{ is open }\}},
\end{equation}
letting $Z_N(x)$ be the analogous sum for lattice trees containing $x$.
It can be shown, as we have done for self-avoiding paths, that the upper-growth rate
\begin{equation}
\limsup_{N\to\infty} (Z_{N}(x))^{1/N}
\end{equation}
is constant in $x$ and non-random on the infinite percolation cluster. 
Using exactly the same proof as in Section \ref{pprroof}, one can further prove

\begin{theorem}
For any $p\in(1/2,1)$ one has  $\bbP_p$ a.s.\ for all point $x\in C$, we have
 \begin{equation}
  \limsup_{N\to\infty} (Z_{N}(x))^{1/N}<   \limsup_{N\to\infty} (\bbE_p\left[Z_{N}(x)\right])^{1/N}=p\gl.
 \end{equation}
\end{theorem}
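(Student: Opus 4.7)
The strategy is to port the proof of Theorem~\ref{superteo} (Section~\ref{pprroof}) almost verbatim to the lattice tree setting. Since a lattice tree of size $N$ carries $N-1$ edges, the annealed identity $\bbE_p[Z_N(x)] = p^{N-1} t_N$ is immediate and yields the equality in the statement. The existence and a.s.\ constancy of $\limsup_{N\to\infty} Z_N(x)^{1/N}$ on the infinite cluster follows as in Proposition~\ref{taex}: the decomposition of Lemma~\ref{conecg} adapts to trees (a tree counted by $Z_N(x)$ can be converted into a tree counted by $Z_N(x')$ for a neighbor $x'$ by grafting or pruning at most a bounded number of edges), and Lemma~\ref{edgead} together with tail-triviality handle the non-randomness. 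By the Borel--Cantelli reduction \eqref{trucfacil}--\eqref{tras}, it then suffices to exhibit some $b<1$ with $\bbE[Z_N^{1/2}] \le b^{N/2}(p^{N-1} t_N)^{1/2}$ for $N$ large.

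The coarse-graining is identical: with $N_0 := \exp(C_2/(1-p)^2)$, partition $E_2$ into $N_0 \times N_0$ blocks $I_x$ and assign to each tree the animal $A(T) := \{x \in \bbZ^2 :\, T \cap I_x \neq \eset\}$, which is $\ast$-connected, contains the origin, and has size $m \in [(N-1)/N_0^2, C(N-1)]$; the number of such animals of size $m$ is still bounded exponentially in $m$. The fractional-moment union bound \eqref{efgrd} reduces the theorem to proving
\[
\bbE\bigl[Z_N(\cA)^{1/2}\bigr] \le (p^{N-1} t_N)^{1/2} \cdot 100^{-m}, \qquad \cA \in \mathfrak A_m,
\]
split by the size of $m$ as in Sections~\ref{smallm} and~\ref{bigm}. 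The small-$m$ case ($m \le N/[N_0(\log N_0)^{1/4}]$) is essentially unchanged: applying the density-lowering $f_\cA$ of \eqref{smallf} gives, by the computation of \eqref{stimac}, $\bbE[f_\cA^{-1}] \le 10^m$; the tilted first moment is the analog of \eqref{roselmc} with exponent $N-1$ in place of $N$, and the sub-exponential correction this creates is absorbed by the main gain $\exp(-m C_2^{1/4})$.

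The substantive step is the large-$m$ case. In the SAW proof, ballistic motion produced, for each selected block $\bar I_x$, an embedded walk $S^{(x)}$ of length $N_0$ contained in $\bar I_x$, driving the lower bound $\bbE_S[Q_x] \ge (1-p)\sqrt{\log N_0}$ of \eqref{cranch}. A tree is not a walk, but it contains a unique self-avoiding walk from its root to every vertex. Root $T$ at the origin; for any $x \in A(T)$ and any $v \in T \cap I_x$, let $\pi_v$ be the unique tree-path from $0$ to $v$, which is self-avoiding with $|\pi_v| \ge d_{\bbZ^2}(0,v)$. Provided $I_x$ lies at lattice distance at least $2 N_0$ from the origin---a condition that excludes only $O(1)$ blocks, negligibly few in the large-$m$ regime---we have $|\pi_v| \ge N_0$, and its last $N_0$ edges lie within $\ell_\infty$-distance $N_0$ of $v$, hence inside $\bar I_x$. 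This tail of $\pi_v$ plays the role of $S^{(x)}$ and supplies $\sum_{e,e'} 1/d(e,e') \ge N_0 \log N_0$, yielding the analog of \eqref{cranch}. The variance bound \eqref{troiqrsi} then transfers directly: under the conditioning $\{T \text{ open}\}$ we have $\go(e) = 1$ for $e \in T$ and all other edges remain i.i.d.\ Bernoulli$(p)$, so the decomposition of the conditional variance into diagonal and cross terms, the bound on the former inherited from the unconditioned variance, and the Cauchy--Schwarz argument \eqref{grillo} for the latter (using $|T \cap \bar I_x| \le 30 N_0 (\log N_0)^{1/4}$ on the restricted set $\bar A(T)$), all go through unchanged. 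The only genuine obstacle in the adaptation is this path-extraction step: a lattice tree can be heavily branched and need not resemble a walk, but the existence inside $T$ of a SAW from the origin to every vertex is precisely what the quadratic-form argument requires.
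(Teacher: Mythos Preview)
Your proposal is correct and follows essentially the same route as the paper. The paper's own proof is a single paragraph that isolates the one nontrivial adaptation --- the construction of the length-$N_0$ path $S^{(x)}$ inside $\bar I_x$ --- and resolves it exactly as you do, by taking the first $N_0$ steps of the unique tree-path from a vertex of $T\cap I_x$ towards the root; the only cosmetic difference is that the paper handles the block containing the origin by a direct depth argument (a tree with $N\ge N_0^2$ vertices must contain a path of length $N_0$ from the root), whereas you simply discard the $O(1)$ blocks near the origin, which is equally valid in the large-$m$ regime.
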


\begin{proof}
The only point that needs to be explained in the adaptation of the proof is how one chooses the $S^{(x)}$ appearing in 
\eqref{crucru}. For $x=0$, we choose arbitrarily a path of length $N_0$ moving away from the root (there has to be at least one if
$N\ge N_0^2$). For the other values of $x$, we take $S_0^{(x)}$ to be a point of the tree that lies in $I_x$, and 
$S^{(x)}$ to be first $N_0$ steps on the paths from this point towards the root, i.e.\ the origin. 
\end{proof}

A similar result could be stated for lattice animals on the supercritical percolation cluster. One could also consider trees or lattice animals in a random potential that is not percolation, and prove a result similar to the one of Section \ref{omega}.

\medskip

{\bf Acknowledgement:} The author is grateful to an anonymous referee for his careful examination of the paper,  and numerous suggestions to improve its quality. He also would like thank to Ben Smith for his help on the text.
 This work was initiated during the authors stay in the Instituto de Matematica Pura e Applicada, he acknowledges kind hospitality and the support of CNPq.

\end{document}